\theoremstyle{plain}
\newtheorem{theorem}{Theorem}[section]
\newtheorem{corollary}[theorem]{Corollary}
\newtheorem{lemma}[theorem]{Lemma}
\newtheorem{proposition}[theorem]{Proposition}
\theoremstyle{definition}
\newtheorem{example}[theorem]{Example}
\newtheorem{remark}[theorem]{Remark}
\newtheorem*{openproblem}{Open Problem}
\numberwithin{equation}{section}
\newcommand{\R}{{\mathbb R}}
\newcommand{\N}{{\mathbb N}}
\providecommand{\vint}[1]{\mathchoice
          {\mathop{\vrule width 5pt height 3 pt depth -2.5pt
                  \kern -9pt \kern 1pt\intop}\nolimits_{\kern -5pt{#1}}}
          {\mathop{\vrule width 5pt height 3 pt depth -2.6pt
                  \kern -6pt \intop}\nolimits_{\kern -3pt{#1}}}
          {\mathop{\vrule width 5pt height 3 pt depth -2.6pt
                  \kern -6pt \intop}\nolimits_{\kern -3pt{#1}}}
          {\mathop{\vrule width 5pt height 3 pt depth -2.6pt
                  \kern -6pt \intop}\nolimits_{\kern -3pt{#1}}}}
\newcommand{\eps}{\varepsilon}
\newcommand{\loc}{\mathrm{loc}}
\newcommand{\BV}{\mathrm{BV}}
\newcommand{\liploc}{\mathrm{Lip}_{\mathrm{loc}}}
\newcommand{\ch}{\text{\raise 1.3pt \hbox{$\chi$}\kern-0.2pt}}
\DeclareMathOperator{\capa}{Cap}
\DeclareMathOperator{\dist}{dist}
\DeclareMathOperator{\diam}{diam}
\DeclareMathOperator{\Lip}{Lip}
\DeclareMathOperator{\supp}{supp}
\DeclareMathOperator{\inte}{int}
\def\XXint#1#2#3{{\setbox0=\hbox{$#1{#2#3}{\int}$}
\vcenter{\hbox{$#2#3$}}\kern-.5\wd0}}
\begin{document}
\title{Strong approximation of sets \\
of  finite perimeter in metric spaces
\footnote{{\bf 2010 Mathematics Subject Classification}: 30L99, 26B30, 28A12.
\hfill \break {\it Keywords\,}: metric measure space, doubling measure, Poincar\'e inequality, bounded variation, finite perimeter,
$\BV$ norm, strong approximation, measure theoretic boundary
}}
\author{Panu Lahti \\
\\
\noindent Department of Mathematical Sciences\\
4199 French Hall West\\
University of Cincinnati\\
2815 Commons Way\\
Cincinnati, OH 45221-0025\\
E-mail: {\tt lahtipk@ucmail.uc.edu}
}
\maketitle

\begin{abstract}
In the setting of a metric space equipped with a doubling measure that supports a Poincar\'e inequality, we show that any set of finite perimeter can be approximated in the $\BV$ norm by a set whose topological and measure theoretic boundaries almost coincide.
This result appears to be new even in the Euclidean setting. The work relies on a quasicontinuity-type result for $\BV$ functions proved by Lahti and Shanmugalingam (2016, \cite{LaSh}).
\end{abstract}

\section{Introduction}

It is well known in the Euclidean setting that a set of finite perimeter can be approximated in a weak sense by sets with smooth boundaries, see e.g. \cite[Theorem 3.42]{AFP}. In the setting of a much more general metric space, it was shown in \cite{ADG} that a set of finite perimeter can be approximated in the $L^1$-sense by sets whose boundaries are sufficiently regular that their Minkowski contents converge to the perimeter of the set.

On the other hand, fairly little seems to be known about approximating sets of finite perimeter in the \emph{$\BV$ norm}.
In the Euclidean setting, this type of result was given in \cite[Theorem 3.1]{TQG}, where it was shown that given a set $E$ of finite perimeter in an open set $\Omega$ and $\eps>0$,
the set $E$ can be approximated in the $\BV(\Omega)$-norm by a set $F$ whose boundary $\partial F\cap \Omega$ is contained in a finite union of $C^1$ hypersurfaces, and so that $\mathcal H^{n-1}(\Omega\cap \partial F\setminus \partial^*F)<\eps$, where $\partial^*F$ is the measure theoretic boundary.

In this paper we show a similar result in a metric space equipped with a doubling measure that supports a Poincar\'e inequality. More precisely, if $\Omega\subset X$ is an open set and $E\subset X$ is a set of finite perimeter in $\Omega$, and $\eps>0$, we show that there exists a set $F\subset X$ with
\[
\Vert \ch_F-\ch_E\Vert_{\BV(\Omega)}<\eps\quad\textrm{and}\quad \mathcal H(\Omega\cap \partial F\setminus \partial^*F)=0,
\]
where $\mathcal H$ is the codimension $1$ Hausdorff measure. This is given in Theorem \ref{thm:approximation for sets of finite perimeter}.
This is a partial generalization of  \cite[Theorem 3.1]{TQG} to the metric setting, and in fact a partial improvement already in the Euclidean setting, since we are able to show that $\mathcal H(\Omega\cap \partial F\setminus \partial^*F)$ is zero instead of just being small. This is a fairly strong regularity requirement on the boundary, since in general the topological boundary of a set of finite perimeter can be much bigger than the measure theoretic boundary, see Example \ref{ex:enlarged rationals}. The proof of 
 Theorem \ref{thm:approximation for sets of finite perimeter} is heavily based on a quasicontinuity-type result for $\BV$ functions given in \cite[Theorem 1.1]{LaSh}.

\paragraph{Acknowledgments.} The research was
funded by a grant from the Finnish Cultural Foundation. Part of the research was conducted during a visit to the University of Oxford. The author wishes to thank this institution for its hospitality, and professor Jan Kristensen for pointing out the reference \cite{TQG}.

\section{Notation and background}

In this section we introduce the necessary notation and assumptions.

In this paper, $(X,d,\mu)$ is a complete metric space equipped
with a Borel regular outer measure $\mu$ satisfying a doubling property, that is,
there is a constant $C_d\ge 1$ such that
\[
0<\mu(B(x,2r))\leq C_d\,\mu(B(x,r))<\infty
\]
for every ball $B=B(x,r)$ with center $x\in X$ and radius $r>0$.
Sometimes we abbreviate $\alpha B(x,r):=B(x,\alpha r)$, $\alpha>0$.
We assume that $X$ consists of at least two points. By iterating the doubling condition, we obtain that for any $x\in X$ and $y\in B(x,R)$ with $0<r\le R<\infty$, we have
\begin{equation}\label{eq:homogenous dimension}
\frac{\mu(B(y,r))}{\mu(B(x,R))}\ge \frac{1}{C}\left(\frac{r}{R}\right)^{Q},
\end{equation}
where $C\ge 1$ and $Q> 0$ only depend on the doubling constant $C_d$.
In general, $C\ge 1$ will denote a constant whose particular value is not important for the purposes of this
paper, and might differ between
each occurrence. When we want to specify that a constant $C$
depends on the parameters $a,b, \ldots,$ we write $C=C(a,b,\ldots)$. Unless otherwise specified, all constants only 
depend on the space $X$, more precisely on the doubling constant $C_d$, the constants $C_P,\lambda$ associated with the Poincar\'e inequality defined below, and $\diam(X)$.

A complete metric space with a doubling measure is proper,
that is, closed and bounded sets are compact. Since $X$ is proper, for any open set $\Omega\subset X$
we define $\liploc(\Omega)$ to be the space of
functions that are Lipschitz in every $\Omega'\Subset\Omega$.
Here $\Omega'\Subset\Omega$ means that $\Omega'$ is open and that $\overline{\Omega'}$ is a
compact subset of $\Omega$. Other local spaces of functions are defined similarly.

For any set $A\subset X$ and $0<R<\infty$, the restricted spherical Hausdorff content
of codimension $1$ is defined by
\begin{equation}\label{eq:definition of Hausdorff content}
\mathcal{H}_{R}(A):=\inf\left\{ \sum_{i=1}^{\infty}
  \frac{\mu(B(x_{i},r_{i}))}{r_{i}}:\,A\subset\bigcup_{i=1}^{\infty}B(x_{i},r_{i}),\,r_{i}\le R\right\}.
\end{equation}
We define the above also for $R=\infty$ by requiring $r_i<\infty$. 
The codimension $1$ Hausdorff measure of a set $A\subset X$ is given by
\begin{equation*}%\label{eq:codimension 1 Hausdorff measure}
  \mathcal{H}(A):=\lim_{R\rightarrow 0}\mathcal{H}_{R}(A).
\end{equation*}
For any outer measure $\nu$ on $X$, the codimension $1$ Minkowski content of a set $A\subset X$ is defined by
\[
\nu^{+}(A):=\liminf_{R\to 0}\frac{\nu\left(\bigcup_{x\in A}B(x,R)\right)}{2R}.
\]

The measure theoretic boundary $\partial^{*}E$ of a set $E\subset X$ is the set of points $x\in X$
at which both $E$ and its complement have positive upper density, i.e.
\[
\limsup_{r\to 0}\frac{\mu(B(x,r)\cap E)}{\mu(B(x,r))}>0\quad\;
  \textrm{and}\quad\;\limsup_{r\to 0}\frac{\mu(B(x,r)\setminus E)}{\mu(B(x,r))}>0.
\]
The measure theoretic interior and exterior of $E$ are defined respectively by
\begin{equation}\label{eq:definition of measure theoretic interior}
I_E:=\left\{x\in X:\,\lim_{r\to 0}\frac{\mu(B(x,r)\setminus E)}{\mu(B(x,r))}=0\right\}
\end{equation}
and
\begin{equation}\label{eq:definition of measure theoretic exterior}
O_E:=\left\{x\in X:\,\lim_{r\to 0}\frac{\mu(B(x,r)\cap E)}{\mu(B(x,r))}=0\right\}.
\end{equation}
A curve $\gamma$ is a rectifiable continuous mapping from a compact interval
into $X$.
A nonnegative Borel function $g$ on $X$ is an upper gradient 
of an extended real-valued function $u$
on $X$ if for all curves $\gamma$ on $X$, we have
\begin{equation}\label{eq:definition of upper gradient}
|u(x)-u(y)|\le \int_\gamma g\,ds,
\end{equation}
where $x$ and $y$ are the end points of $\gamma$. We interpret $|u(x)-u(y)|=\infty$ whenever  
at least one of $|u(x)|$, $|u(y)|$ is infinite.
Of course, by replacing $X$ with a set $A\subset X$ and considering curves $\gamma$ in $A$, we can talk about a function $g$ being an upper gradient of $u$ in $A$.
We define the local Lipschitz constant of a locally Lipschitz function $u\in\liploc(X)$ by
\[
\Lip u(x):=\limsup_{r\to 0}\sup_{y\in B(x,r)\setminus \{x\}}\frac{|u(y)-u(x)|}{d(y,x)}.
\]
Then $\Lip u$ is an upper gradient of $u$, see e.g. \cite[Proposition 1.11]{Che}.
Upper gradients were originally introduced in~\cite{HK}.

If $g$ is a nonnegative $\mu$-measurable function on $X$
and (\ref{eq:definition of upper gradient}) holds for $1$-almost every curve,
we say that $g$ is a $1$-weak upper gradient of~$u$. 
A property holds for $1$-almost every curve
if it fails only for a curve family with zero $1$-modulus. 
A family $\Gamma$ of curves is of zero $1$-modulus if there is a 
nonnegative Borel function $\rho\in L^1(X)$ such that 
for all curves $\gamma\in\Gamma$, the curve integral $\int_\gamma \rho\,ds$ is infinite.

Given an open set $\Omega\subset X$, we consider the following norm
\[
\Vert u\Vert_{N^{1,1}(\Omega)}:=\Vert u\Vert_{L^1(\Omega)}+\inf \Vert g\Vert_{L^1(\Omega)},
\]
with the infimum taken over all $1$-weak upper gradients $g$ of $u$ in $\Omega$.
The substitute for the Sobolev space $W^{1,1}(\Omega)$ in the metric setting is the Newton-Sobolev space
\[
N^{1,1}(\Omega):=\{u:\|u\|_{N^{1,1}(\Omega)}<\infty\}.
\]
It is known that for any $u\in N_{\loc}^{1,1}(\Omega)$, there exists a minimal $1$-weak
upper gradient, denoted by $g_u$, that satisfies $g_{u}\le g$ 
$\mu$-almost everywhere in $\Omega$, for any $1$-weak upper gradient $g\in L_{\loc}^{1}(\Omega)$
 of $u$ in $\Omega$, see \cite[Theorem 2.25]{BB}.
For more on Newton-Sobolev spaces, we refer to~\cite{S, BB, HKST}.

Next we recall the definition and basic properties of functions
of bounded variation on metric spaces, see \cite{M}. See also e.g. \cite{AFP, EvaG92, Giu84, Zie89} for the classical 
theory in the Euclidean setting.
For $u\in L^1_{\loc}(X)$, we define the total variation of $u$ in $X$ to be 
\[
\|Du\|(X):=\inf\left\{\liminf_{i\to\infty}\int_X g_{u_i}\,d\mu:\, u_i\in \Lip_{\loc}(X),\, u_i\to u\textrm{ in } L^1_{\loc}(X)\right\},
\]
where each $g_{u_i}$ is an upper gradient of $u_i$.
We say that a function $u\in L^1(X)$ is of bounded variation, 
and denote $u\in\BV(X)$, if $\|Du\|(X)<\infty$.
By replacing $X$ with an open set $\Omega\subset X$ in the definition of the total variation, we can define $\|Du\|(\Omega)$.
For an arbitrary set $A\subset X$, we define
\[
\|Du\|(A)=\inf\{\|Du\|(\Omega):\, A\subset\Omega,\,\Omega\subset X
\text{ is open}\}.
\]
If $u\in\BV(\Omega)$, $\|Du\|(\cdot)$ is a finite Radon measure on $\Omega$ by \cite[Theorem 3.4]{M}.
The $\BV$ norm is defined by
\[
\Vert u\Vert_{\BV(\Omega)}:=\Vert u\Vert_{L^1(\Omega)}+\Vert Du\Vert(\Omega).
\]
A $\mu$-measurable set $E\subset X$ is said to be of finite perimeter if $\|D\ch_E\|(X)<\infty$, where $\ch_E$ is the characteristic function of $E$.
The perimeter of $E$ in $\Omega$ is also denoted by
\[
P(E,\Omega):=\|D\ch_E\|(\Omega).
\]
Similarly as above, if $P(E,\Omega)<\infty$, then $P(E,\cdot)$ is a finite Radon measure
on $\Omega$.
For any Borel sets $E_1,E_2\subset X$ we have by \cite[Proposition 4.7]{M}
\begin{equation}\label{eq:Caccioppoli sets form an algebra}
P(E_1\cup E_2,X)\le P(E_1,X)+P(E_2,X). 
\end{equation}
Similarly it can be shown that if $\Omega\subset X$ is an open set and $u,v\in L^1_{\loc}(\Omega)$, then
\begin{equation}\label{eq:subadditivity}
\Vert D(u+v)\Vert(\Omega)\le \Vert Du\Vert(\Omega)+\Vert Dv\Vert(\Omega).
\end{equation}
We have the following coarea formula from~\cite[Proposition 4.2]{M}: if $\Omega\subset X$ is an open set and $u\in L^1_{\loc}(\Omega)$, then
\begin{equation}\label{eq:coarea}
\|Du\|(\Omega)=\int_{-\infty}^{\infty}P(\{u>t\},\Omega)\,dt.
\end{equation}
If $\Vert Du\Vert(\Omega)<\infty$, the above is true with $\Omega$ replaced by any Borel set $A\subset\Omega$.

We will assume throughout that $X$ supports a $(1,1)$-Poincar\'e inequality,
meaning that there exist constants $C_P\ge 1$ and $\lambda \ge 1$ such that for every
ball $B(x,r)$, every $u\in L^1_{\loc}(X)$,
and every upper gradient $g$ of $u$, we have 
\[
\vint{B(x,r)}|u-u_{B(x,r)}|\, d\mu 
\le C_P r\vint{B(x,\lambda r)}g\,d\mu,
\]
where 
\[
u_{B(x,r)}:=\vint{B(x,r)}u\,d\mu :=\frac 1{\mu(B(x,r))}\int_{B(x,r)}u\,d\mu.
\]
%By applying the Poincar\'e inequality to approximating locally Lipschitz functions in the definition of the total variation, 
%we get the following $(1,1)$-Poincar\'e inequality for $\BV$ functions. There exists a constant $C$
%such that for every ball $B(x,r)$ and every 
%$u\in L^1_{\loc}(X)$, we have
%\[
%\vint{B(x,r)}|u-u_{B(x,r)}|\,d\mu
%\le Cr\, \frac{\Vert Du\Vert (B(x,\lambda r))}{\mu(B(x,\lambda r))}.
%\]
%For $\mu$-measurable sets $E\subset X$, the above can be written as
%\begin{equation}\label{eq:relative isoperimetric inequality}
%\min\{\mu(B(x,r)\cap E),\,\mu(B(x,r)\setminus E)\}\le CrP(E,B(x,\lambda r)).
%\end{equation}

The $1$-capacity of a set $A\subset X$ is given by
\[
\capa_1(A):=\inf \Vert u\Vert_{N^{1,1}(X)},
\]
where the infimum is taken over all functions $u\in N^{1,1}(X)$ such that $u\ge 1$ in $A$.
For basic properties satisfied by the $1$-capacity, such as monotonicity and countable subadditivity, see e.g. \cite{BB}.

Given a set of finite perimeter $E\subset X$, for $\mathcal H$-almost every $x\in \partial^*E$ we have
\begin{equation}\label{eq:definition of gamma}
\gamma \le \liminf_{r\to 0} \frac{\mu(E\cap B(x,r))}{\mu(B(x,r))} \le \limsup_{r\to 0} \frac{\mu(E\cap B(x,r))}{\mu(B(x,r))}\le 1-\gamma
\end{equation}
where $\gamma \in (0,1/2]$ only depends on the doubling constant and the constants in the Poincar\'e inequality, 
see~\cite[Theorem 5.4]{A1}.
For an open set $\Omega\subset X$ and a $\mu$-measurable set $E\subset X$
with $P(E,\Omega)<\infty$, we have for any Borel set $A\subset\Omega$
\begin{equation}\label{eq:def of theta}
P(E,A)=\int_{\partial^{*}E\cap A}\theta_E\,d\mathcal H,
\end{equation}
where
$\theta_E\colon X\to [\alpha,C_d]$ with $\alpha=\alpha(C_d,C_P,\lambda)>0$, see \cite[Theorem 5.3]{A1} 
and \cite[Theorem 4.6]{AMP}.

The lower and upper approximate limits of a $\mu$-measurable function $u$ on $X$ are defined respectively by
\begin{equation}\label{eq:lower approximate limit}
u^{\wedge}(x):
=\sup\left\{t\in\overline\R:\,\lim_{r\to 0}\frac{\mu(B(x,r)\cap\{u<t\})}{\mu(B(x,r))}=0\right\}
\end{equation}
and
\begin{equation}\label{eq:upper approximate limit}
u^{\vee}(x):
=\inf\left\{t\in\overline\R:\,\lim_{r\to 0}\frac{\mu(B(x,r)\cap\{u>t\})}{\mu(B(x,r))}=0\right\}.
\end{equation}

Note that we understand $\BV$ functions to be $\mu$-equivalence classes. To consider 
continuity properties, we need to consider the pointwise representatives $u^{\wedge}$ and
$u^{\vee}$. We also define the representative
\begin{equation}\label{eq:precise representative}
\widetilde{u}:=(u^{\wedge}+u^{\vee})/2.
\end{equation}

\section{Preliminary measure theoretic results}

In this section we discuss some measure theoretic results that will be needed in the proof of our main result.

First we note that the following coarea inequality holds.

\begin{lemma}\label{lem:coarea inequality}
If $U\subset X$ is an open set and $w\in\liploc(U)$, then
\[
\int_{-\infty}^{\infty}\mathcal H(U\cap \partial\{w>t\})\,dt\le C_{\textrm{co}}\int_{U}\Lip w\,d\mu,
\]
where $C_{\textrm{co}}=C_{\textrm{co}}(C_d)$.
\end{lemma}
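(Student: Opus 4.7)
The plan is to bound the larger quantity $\int_\R \mathcal H(U \cap \{w = t\})\,dt$ by a Vitali-type covering argument; this majorises the desired integral because continuity of $w$ forces $\partial\{w > t\} \subset \{w \ge t\} \setminus \{w > t\} = \{w = t\}$ for every $t \in \R$. First I would reduce, by exhausting $U = \bigcup_k \Omega_k$ with $\Omega_k \Subset \Omega_{k+1}$ and applying monotone convergence both of $\mathcal H(\Omega_k \cap \{w=t\}) \nearrow \mathcal H(U \cap \{w=t\})$ and then in $t$, to the case where $w$ is Lipschitz on $U$ with some constant $L_0$ and $\mu(U) < \infty$. We may also assume $\int_U \Lip w\,d\mu < \infty$, since otherwise the inequality is trivial.

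Fix $\eps, \delta > 0$. For $\mu$-a.e.\ $x \in U$ I would pick a radius $s_x > 0$ with $B(x, 5s_x) \subset U$ and $5 s_x \le \delta$ so that two conditions hold simultaneously. First, by the $\limsup$ definition of $\Lip w(x)$, we can require $|w(y) - w(x)| \le (\Lip w(x) + \eps)\, d(y, x)$ for every $y \in B(x, 5 s_x)$, whence via the triangle inequality through $x$, $\operatorname{osc}_{B(x, 5 s_x)} w \le 10(\Lip w(x) + \eps)\,s_x$. Second, by Lebesgue's differentiation theorem applied to $\Lip w \in L^1(U)$, we can require $\vint{B(x, s_x)} \Lip w\,d\mu \ge \Lip w(x) - \eps$, so that
\[
\mu(B(x, s_x))\, \Lip w(x) \le \int_{B(x, s_x)} \Lip w\,d\mu + \eps\, \mu(B(x, s_x)).
\]
An application of the $5r$-covering lemma to this fine family yields a disjoint subfamily $B_i := B(x_i, s_i)$ whose $5$-fold enlargements $5B_i$ cover $\mu$-almost all of $U$; the $\mu$-null exceptional set is covered by a separate Vitali cover of arbitrarily small total $\mu$-measure, whose contribution to the final integral is controlled by $C L_0 \eps$ via the global Lipschitz bound on $w$.

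On the good part, since $w(5B_i)$ lies in an interval of length at most $10(\Lip w(x_i) + \eps)\, s_i$, the set $T_i := \{t \in \R : 5B_i \cap \{w = t\} \ne \emptyset\}$ has $\mathcal L^1(T_i) \le 10(\Lip w(x_i) + \eps)\,s_i$. Covering $U \cap \{w = t\}$ by those $5B_i$ with $t \in T_i$, integrating in $t$, and applying the doubling estimate $\mu(5 B_i) \le C(C_d)\,\mu(B_i)$ gives
\[
\int_\R \mathcal H_\delta(U \cap \{w = t\})\,dt \le C(C_d) \sum_i \mu(B_i)\,(\Lip w(x_i) + \eps) + C L_0 \eps.
\]
The Lebesgue bound above, together with the disjointness of the $B_i$ inside $U$, then controls the sum by $C(C_d)\big(\int_U \Lip w\,d\mu + \eps\,\mu(U)\big)$. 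Sending $\eps \to 0$, then $\delta \to 0$ (using $\mathcal H_\delta \nearrow \mathcal H$ and monotone convergence), and finally $\Omega_k \nearrow U$, yields the claim.

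The single delicate point is the simultaneous choice of $s_x$: the oscillation control needs $s_x$ so small that the pointwise $\limsup$ defining $\Lip w(x)$ is essentially attained, while the Lebesgue bound needs $s_x$ so small that averages of $\Lip w$ over $B(x, s_x)$ are within $\eps$ of $\Lip w(x)$. Since both conditions hold for all sufficiently small radii at $\mu$-a.e.\ $x$, a common choice $s_x \le \delta/5$ always exists, and the rest is routine bookkeeping plus the doubling property, which supplies the constant $C_{\mathrm{co}} = C_{\mathrm{co}}(C_d)$.
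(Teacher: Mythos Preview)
Your argument is correct and genuinely different from the paper's. The paper does not prove the inequality from scratch: it invokes a coarea inequality for the Minkowski content, $\int_{\R}\nu^{+}(\partial\{u>t\})\,dt\le \int_X \Lip u\,d\nu$ (taken from \cite[Proposition 3.5]{KoLa}, based on \cite{BH}), applies it with $\nu=\mu|_{U'}$ and a bounded Lipschitz extension of $w$, and then converts Minkowski content to codimension-$1$ Hausdorff measure via the elementary comparison $\mathcal H(A)\le C_d^3\,\mu^{+}(A)$; an exhaustion $U''\Subset U'\Subset U$ finishes. Your route is a self-contained $5r$-covering argument: at $\mu$-a.e.\ point you pick a small ball on which both the pointwise Lipschitz constant controls oscillation and the Lebesgue average of $\Lip w$ is close to its value, then sum. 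This avoids the Minkowski-content detour and the external references entirely, and in fact yields the slightly stronger bound on $\int_{\R}\mathcal H(U\cap\{w=t\})\,dt$. The paper's approach is much shorter given the cited black boxes; yours is longer but transparent, and makes clear exactly where doubling enters (only through $\mu(5B)\le C_d^3\mu(B)$). The one step worth writing out carefully in a final version is the treatment of the $\mu$-null exceptional set $N$: you need the cover of $N$ to have small total $\mu$-measure (not small $\sum \mu(B'_j)/r'_j$), and then the global Lipschitz bound $L_0$ cancels the $1/r'_j$ after integrating in $t$, exactly as you indicate.
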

\begin{proof}
By \cite[Proposition 3.5]{KoLa} (which is based on \cite[Lemma 3.1]{BH}), the following coarea inequality holds:
if $\nu$ is a positive Radon measure of finite mass and
$u\in\Lip(X)$ is bounded, then
\[
\int_{-\infty}^{\infty}\nu^+(\partial\{u>t\})\,dt\le \int_{X}\Lip u\,d\nu.
\]
Choose $U''\Subset U'\Subset U$ and let $\nu:=\mu|_{U'}$, so that $\nu$ is of finite mass.
Define a function $u:=w$ in $U'$, so that $u\in\Lip(U')$, and extend it to a bounded function $u\in \Lip(X)$. Since $\mathcal H(A)\le C_d^3\mu^+(A)$ for any $A\subset X$ (see e.g. \cite[Proposition 3.12]{KoLa}), we have
\begin{align*}
\frac{1}{C_d^3}\int_{-\infty}^{\infty}\mathcal H(U''\cap \partial\{w>t\})\,dt
&\le\int_{-\infty}^{\infty}\mu^+(U''\cap \partial\{w>t\})\,dt\\
&= \int_{-\infty}^{\infty}\mu^+(U''\cap \partial\{u>t\})\,dt\\
&\le \int_{-\infty}^{\infty}\nu^+(\partial\{u>t\})\,dt\\
&\le  \int_{X}\Lip u\,d\nu
=  \int_{U'}\Lip w\,d\mu.
\end{align*}
By letting $U''\nearrow U$ and using Lebesgue's monotone convergence theorem on both sides, we obtain the result with $C_{\textrm{co}}=C_d^3$.
\end{proof}

Now we can show that the level sets of a locally Lipschitz function have the following
weak regularity.

\begin{proposition}\label{prop:null difference between topological and meas theor bdry}
Let $U\subset X$ be an open set and let $w\in\liploc(U)$. Then
$\mathcal H(U\cap\partial \{w>t\}\setminus \partial^*\{w>t\})=0$ for almost every $t\in\R$.
\end{proposition}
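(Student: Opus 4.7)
The plan is to argue by a Tonelli-type exchange of order of integration. By exhausting $U$ with open sets $U'\Subset U$ on which $w$ is Lipschitz, it suffices to show, for each such $U'$, that $\mathcal H(U'\cap\partial\{w>t\}\setminus\partial^*\{w>t\})=0$ for almost every $t$. The key observation is that since $w$ is continuous, $\{w>t\}$ is open and $\partial\{w>t\}\subset\{w=t\}$; hence, writing $B_t := U'\cap\partial\{w>t\}\setminus\partial^*\{w>t\}$, any $x\in U'$ can belong to $B_t$ only for the single value $t=w(x)$, so the set $\{t\in\R:\,x\in B_t\}$ has Lebesgue measure zero.

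Next, I would invoke Lemma~\ref{lem:coarea inequality} to obtain
\[
\int_\R \mathcal H(U'\cap\partial\{w>t\})\,dt \le C_{\mathrm{co}}\int_{U'}\Lip w\,d\mu < \infty,
\]
so that $\mathcal H(U'\cap\partial\{w>t\})$ is finite for almost every $t$. A formal Tonelli-type interchange in the double integral $\int_\R\int_{U'}\ch_{B_t}(x)\,d\mathcal H(x)\,dt$ then yields
\[
\int_\R \mathcal H(B_t)\,dt \;=\; \int_{U'}\mathcal L^1\bigl(\{t\in\R:\,x\in B_t\}\bigr)\,d\mathcal H(x)\;=\;0,
\]
since each inner integral on the right is zero. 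This forces $\mathcal H(B_t)=0$ for almost every $t$, which is the desired statement.

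The main obstacle is making the Tonelli interchange rigorous, since $\mathcal H$ need not be $\sigma$-finite on $U'$ in general, and naive Fubini-type reasoning can genuinely fail for non-$\sigma$-finite measures. A natural workaround is to work with the restricted Hausdorff contents $\mathcal H_R$, which are finite outer measures on the bounded set $U'$; once the analog of the displayed identity is established for each $\mathcal H_R$, the desired conclusion follows by taking $R\to 0$ along a countable sequence and using $\mathcal H(A)=\lim_{R\to 0}\mathcal H_R(A)$. The joint Borel measurability of the integrand, also needed to apply Tonelli, follows from the continuity of $w$ together with the Borel dependence of $\{w>t\}$ and $\partial^*\{w>t\}$ on $t$.
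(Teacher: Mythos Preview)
Your argument has a genuine gap at the Tonelli step, and the proposed workaround does not repair it. The restricted Hausdorff content $\mathcal H_R$ is only a subadditive outer measure, not a countably additive one: for disjoint Borel sets $A_1,A_2$ one does \emph{not} in general have $\mathcal H_R(A_1\cup A_2)=\mathcal H_R(A_1)+\mathcal H_R(A_2)$ when $R>0$. Consequently Tonelli's theorem does not apply to integration against $\mathcal H_R$, and there is no meaningful iterated integral $\int_{U'}\mathcal L^1(\{t:x\in B_t\})\,d\mathcal H_R(x)$ that would yield your displayed identity. In effect you have traded one missing hypothesis of Tonelli ($\sigma$-finiteness of $\mathcal H$) for another (countable additivity of $\mathcal H_R$), and neither is available here.

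The paper avoids the Tonelli interchange altogether while still exploiting your key disjointness observation. Setting $A:=U'\cap\bigcup_{s\in\R}\partial^*\{w>s\}$, one has $B_t\subset U'\setminus A$ for every $t$ because the level sets $\{w=s\}$ are pairwise disjoint. Hence for any open $V$ with $U'\setminus A\subset V\subset U'$, Lemma~\ref{lem:coarea inequality} gives
\[
\int_\R \mathcal H(B_t)\,dt \le \int_\R \mathcal H(V\cap\partial\{w>t\})\,dt \le C_{\mathrm{co}}\int_V \Lip w\,d\mu \le C\Vert Dw\Vert(V).
\]
Taking the infimum over such $V$ produces $C\Vert Dw\Vert(U'\setminus A)$ on the right, and this vanishes by the $\BV$ coarea formula \eqref{eq:coarea} combined with \eqref{eq:def of theta}, since $\partial^*\{w>t\}\cap(U'\setminus A)=\emptyset$ for every $t$. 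The point is that the detour through the Radon measure $\Vert Dw\Vert$ replaces the ill-posed exchange of integrals: one first integrates in $t$ via Lemma~\ref{lem:coarea inequality} on an open superset, and only then shrinks down to the bad set $U'\setminus A$ using outer regularity of $\Vert Dw\Vert$.
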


\begin{proof}
Fix $U'\Subset U$. Note that $w\in\Lip(U')\subset \BV(U')$. Let
\[
A:=U'\cap\bigcup_{s\in\R}\partial^*\{w>s\}.
\]
Note that in $U$, $\partial^*\{w>s\}\subset \partial\{w>s\}\subset \{w=s\}$, which are pairwise disjoint sets for distinct values of $s$.
Note also that for any open set $V\subset U'$, denoting the minimal $1$-weak upper gradient of $w$ in $V$ by $g_w$, we have
\begin{equation}\label{eq:Lipschitz constant and variation measure}
\int_V \Lip w\,d\mu\le C \int_V g_w\,d\mu\le C\Vert Dw\Vert(V),
\end{equation}
where the first inequality follows from the fact that $\Lip w\le Cg_w$ $\mu$-almost everywhere, see \cite[Proposition 4.26]{Che} or \cite[Proposition 13.5.2]{HKST}, and the second inequality follows from \cite[Remark 4.7]{HKLL}.

By using the disjointness of the sets $\partial^*\{u>s\}\subset \partial\{u>s\}$ in $U$, Lemma \ref{lem:coarea inequality}, and \eqref{eq:Lipschitz constant and variation measure}, we estimate for any open set $V$ with $ U'\setminus A\subset V\subset U'$
\begin{align*}
\int_{-\infty}^{\infty} \mathcal H(U'\cap\partial \{w>t\}\setminus \partial^*\{w>t\})\,dt
& =\int_{-\infty}^{\infty} \mathcal H(U'\cap\partial \{w>t\}\setminus A)\,dt\\
& \le \int_{-\infty}^{\infty} \mathcal H(V\cap \partial \{w>t\})\,dt\\
& \le \int_V \Lip w\,d\mu\\
& \le C\Vert Dw\Vert(V).
\end{align*}
By taking the infimum of open sets $V$ as above, we get $C\Vert Dw\Vert(U'\setminus A)$ on the right-hand side. By also using the $\BV$ coarea inequality \eqref{eq:coarea} and \eqref{eq:def of theta}, we obtain
\begin{align*}
\int_{-\infty}^{\infty} \mathcal H(U'\cap\partial \{w>t\}\setminus \partial^*\{w>t\})\,dt
& \le C\Vert Dw\Vert(U'\setminus A)\\
& = C\int_{-\infty}^{\infty} P(\{w>t\},U'\setminus A)\,dt\\
& \le C\int_{-\infty}^{\infty} \mathcal H(U'\cap\partial^*\{w>t\}\setminus A)\,dt\\
&=0
\end{align*}
since $U'\cap\partial^*\{w>t\}\setminus A=\emptyset$ for all $t\in\R$.
By exhausting $U$ by sets $U'\Subset U$, we obtain the result.
\end{proof}

Since we are going to work with quasicontinuity-type results,
in the following we prove a few results on how to analyse and manipulate sets of small capacity.

\begin{remark}\label{rmk:capacities and Hausdorff contents}
The $1$-capacity and the Hausdorff contents are closely related:
it follows from~\cite[Theorem~4.3, Theorem~5.1]{HaKi} that
$\capa_1(A)=0$ if and only if $\mathcal{H}(A)=0$. 
For any $R>0$ and $A\subset X$, from the proof of \cite[Lemma 3.4]{KKST3} it follows that
\[
\capa_1(A)\le C(C_d,C_P,\lambda,R)\mathcal H_R(A),
\]
and 
by combining~\cite[Theorem~4.3]{HaKi} and the proof of~\cite[Theorem 5.1]{HaKi}
we obtain that conversely
\[
\mathcal H_{R}(A)\le C(C_d,C_P,\lambda,R)\capa_1(A).
\]
Finally, we note that $\capa_1$ is an outer capacity, meaning that
\[
\capa_1(A)=\inf\{\capa_1(U):\,U\supset A\textrm{ is open}\}
\]
for any $A\subset X$,
see e.g. \cite[Theorem 5.31]{BB}. 
\end{remark}

\begin{lemma}\label{lem:uniform convergence}
Let $A\subset X$ be a Borel set with $\mathcal H(A)<\infty$, and let $\eps>0$. Then there exists an open set $U\supset A$ with $\capa_1(U)\le C\mathcal H(A)+\eps$ such that
\[
r\frac{\mathcal H(A\cap B(x,r))}{\mu(B(x,r))}\to 0\qquad\textrm{as }r\to 0
\]
uniformly for all  $x\in X\setminus U$.
\end{lemma}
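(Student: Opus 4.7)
The plan is to take $U$ as the $2$-enlargement of a countable cover of $A$ at a sufficiently fine scale. Given $\eps > 0$, I use the inequality $\mathcal H_\delta(A) \le \mathcal H(A)$ (valid for any $\delta > 0$) to find, for a fixed small $\delta>0$, a countable cover $\{B(y_i, r_i)\}_i$ of $A$ with $r_i \le \delta$ and
\[
\sum_i \frac{\mu(B(y_i, r_i))}{r_i} \le \mathcal H(A) + \frac{\eps}{2C},
\]
where $C$ is the constant in the standard capacity estimate $\capa_1(B(y, 2r)) \le C\mu(B(y,r))/r$, which follows from doubling applied to the Lipschitz cut-off $\max\{0,\,1 - \max(d(\cdot, y) - 2r, 0)/r\}$. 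Setting $U := \bigcup_i B(y_i, 2r_i)$, the set $U$ is open, contains $A$, and countable subadditivity of $\capa_1$ yields $\capa_1(U) \le C\mathcal H(A) + \eps/2$.

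The crucial observation for uniform convergence is that if $x \notin U$ then $d(x,y_i) > 2r_i$ for every $i$, which forces any cover-ball $B(y_i, r_i)$ meeting $B(x, r)$ to satisfy $r_i < r$. The cover-balls intersecting $B(x, r)$ thus provide an admissible cover of $A \cap B(x, r)$ by balls of radius strictly less than $r$, yielding
\[
\mathcal H_r(A \cap B(x,r)) \le \sum_{i \in I(x,r)} \frac{\mu(B_i)}{r_i},
\]
where $I(x,r)$ indexes the intersecting cover-balls.

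The main obstacle is that the conclusion involves the Hausdorff \emph{measure} $\mathcal H(A \cap B(x,r))$, not just the content $\mathcal H_r(A \cap B(x,r))$ at the same scale. To bridge this gap I would iterate the construction at geometrically finer scales $\delta/2^k$, choosing at each scale a refined sub-cover of $A$ whose $2$-enlargement remains inside $U$; the $k$-th level cover then bounds $\mathcal H_{\delta/2^k}(A \cap B(x,r))$, and letting $k \to \infty$ passes to $\mathcal H(A \cap B(x,r))$. Finally, one may need to adjoin to $U$ a small supplementary open set of $\capa_1$-capacity at most $\eps/2$ in order to absorb exceptional points in $\overline A \setminus A$ at which the differentiation of the finite Radon measure $\nu := \mathcal H|_A$ with respect to $\mu$ is not uniform; by the Lebesgue--Besicovitch theorem for $\nu$ these exceptional points form an $\mathcal H$-negligible set, which by Remark~\ref{rmk:capacities and Hausdorff contents} is $\capa_1$-null and can therefore be covered within any prescribed capacity budget.
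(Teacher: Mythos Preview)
Your proposal has a fundamental gap: you never establish that $r\,\mathcal H(A\cap B(x,r))/\mu(B(x,r))$ tends to zero, let alone uniformly in $x\in X\setminus U$. Your bound $\mathcal H_r(A\cap B(x,r))\le\sum_{i\in I(x,r)}\mu(B_i)/r_i$ is at best dominated by the fixed number $\mathcal H(A)+\eps/(2C)$, and multiplying by $r/\mu(B(x,r))$ need not drive this to $0$ (take $X=\mathbb R$, where $\mu(B(x,r))\sim r$). One can argue that for each fixed $x\notin U$ the partial sum over $I(x,r)$ tends to $0$ as $r\to 0$, since every index eventually drops out, but the rate depends on how cover-balls accumulate near $x$; a single-scale cover carries no uniform control over this. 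Your content-to-measure step is likewise unjustified: a refined cover of $A$ at scale $\delta/2^k$ has no reason to have its $2$-enlargement inside the original $U$, and enlarging $U$ at each step destroys the capacity bound. Finally, Lebesgue--Besicovitch differentiation of $\mathcal H|_A$ yields only pointwise $\mu$-a.e.\ convergence with a $\mu$-null (not $\mathcal H$-null or $\capa_1$-null) exceptional set, so it cannot supply the required uniformity either.

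The paper's argument is organised quite differently. It exhausts $A$ from inside by compacta, choosing disjoint compact $H_i\subset A$ with $\mathcal H\bigl(A\setminus\bigcup_{j\le i}H_j\bigr)<2^{-i}\eps$, and surrounds each $H_i$ by an open $U_i$ with $\dist(H_i,X\setminus U_i)\ge r_i$ for a decreasing sequence $r_i$ and with $\capa_1(U_i)\le C\mathcal H(H_i)+2^{-i}\eps$. For each level $i$ it then defines the ``bad set'' $G_i$ of points $x\notin\bigcup_{j\le i}U_j$ at which the ratio exceeds $1/i$ for some scale $r<r_i$. A $5r$-covering of $G_i$ produces disjoint balls whose centres lie outside every $U_j$, $j\le i$, and whose radii are $<r_i\le r_j$; the distance condition forces each such ball to miss $H_j$, so $A\cap B(x_k,r_k)\subset A\setminus\bigcup_{j\le i}H_j$. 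Disjointness then gives $\capa_1(G_i)\le Ci\,\mathcal H\bigl(A\setminus\bigcup_{j\le i}H_j\bigr)\le Ci\,2^{-i}\eps$, which is summable. The set $U$ is the union of all the $U_i$ and $G_i$ (plus a small open set covering the $\mathcal H$-null residual), and outside $U$ the uniformity is immediate: if $r<r_j$ then $x\notin G_j$ forces the ratio below $1/j$. It is this inner compact exhaustion together with the level-by-level bad sets---not a single cover---that manufactures the uniform smallness.
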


\begin{proof}
Inductively, we can pick compact sets $H_i\subset A\setminus \bigcup_{j=1}^{i-1}H_j$ with
\[
\mathcal H\left(A\setminus \bigcup_{j=1}^i H_j\right)<2^{-i}\eps,\quad i\in\N;
\]
see e.g. \cite[Proposition 1.43]{AFP}.
Also pick open sets $U_i\supset H_i$ and a decreasing sequence of numbers $1/5\ge r_1\ge r_2\ge \ldots$ with $\dist(H_i,X\setminus U_i)\ge r_i$ such that
\[
\capa_1(U_i)\le \capa_1(H_i)+2^{-i}\eps \le C\mathcal H_1(H_i)+2^{-i}\eps\le C\mathcal H(H_i)+2^{-i}\eps;
\]
see Remark \ref{rmk:capacities and Hausdorff contents}.

Then define for each $i\in\N$
\[
G_i:=
\left\{x\in X\setminus \bigcup_{j=1}^i U_j:\,\exists r\in(0,r_i)\ \ \textrm{such that }  
\ r\frac{\mathcal H(A\cap B(x,r))}{\mu(B(x,r))}\ge \frac 1i\right\}.
\]
Fix $i\in\N$. From the definition of $G_i$ we obtain a covering $\{B(x,r(x))\}_{x\in G_i}$ of $G_i$, and by the $5$-covering 
theorem, we can extract a countable collection of disjoint balls $\{B(x_k,r_k)\}_{k\in\N}$ such that the balls $B(x_k,5r_k)$ cover $G_i$.
Thus by Remark \ref{rmk:capacities and Hausdorff contents},
\begin{align*}
 \capa_1(G_i)
&\le C\mathcal H_{1}(G_i)
\le C\sum_{k\in\N}\frac{\mu(B(x_k,5r_k))}{5r_k}
\le C\sum_{k\in\N}\frac{\mu(B(x_k,r_k))}{r_k}\\
&\le C i\sum_{k\in\N}\mathcal H(A\cap B(x_k,r_k))
\le C i\mathcal H\left(A\setminus \bigcup_{j=1}^i H_j\right)\le Ci2^{-i}\eps.
\end{align*}

Thus 
\begin{align*}
\capa_1\left(\bigcup_{i\in\N}U_i\cup G_i\right)
&\le \sum_{i\in\N}\capa_1(U_i)+\sum_{i\in\N}\capa_1(G_i)\\
&\le \sum_{i\in\N}\left(C\mathcal H(H_i)+2^{-i}\eps\right)+C\sum_{i\in\N}i 2^{-i}\eps\\
&\le C\mathcal H(A)+C\eps.
\end{align*}
For $x\in X\setminus \bigcup_{i\in\N}U_i\cup G_i$, if $0<r<r_j$, then
\[
r\frac{\mathcal H(A\cap B(x,r))}{\mu(B(x,r))}< \frac 1j.
\]
Finally, since
\[
\capa_1\left(A\setminus \bigcup_{i\in\N}U_i\right)\le
C\mathcal H\left(A\setminus \bigcup_{i\in\N}U_i\right)=0,
\]
we can choose an open set $V\supset A\setminus \bigcup_{i\in\N}U_i$ with
$\capa_1(V)<\eps$,
and then we can take $U:=\bigcup_{i\in\N}U_i\cup G_i\cup V$.
\end{proof}

\begin{lemma}\label{lem:uniform convergence of G}
Let $G\subset X$ and $\eps>0$. Then there exists an open set $U\supset G$ with $\capa_1(U)\le C\capa_1(G)+\eps$ such that
\[
\frac{\mu(B(x,r)\cap G)}{\mu(B(x,r))}\to 0\qquad\textrm{as }r\to 0
\]
uniformly for $x\in X\setminus U$.
\end{lemma}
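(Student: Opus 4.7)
Plan. The strategy is to reduce to Lemma \ref{lem:uniform convergence} applied to the measure-theoretic boundary $\partial^*E$ of a level set $E$ of a near-optimal Newton--Sobolev potential for $G$, and then to convert the codimension-one density decay there into ordinary density decay for $G$ through the relative isoperimetric inequality.

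Assume $\capa_1(G)<\infty$, otherwise take $U=X$. Pick $u\in N^{1,1}(X)$ with $0\le u\le 1$, $u\ge 1$ on $G$, and $\Vert u\Vert_{N^{1,1}(X)}\le \capa_1(G)+\eps/K$ for a large constant $K$ to be chosen; take the lower semicontinuous representative so that $\{u>s\}$ is open for each $s$. By the coarea formula \eqref{eq:coarea} and Chebyshev on $(1/4,1/2)$, pick $t$ in this interval with $P(E,X)\le 8\Vert u\Vert_{N^{1,1}(X)}$ where $E:=\{u>t\}$; by \eqref{eq:def of theta}, $A:=\partial^*E$ satisfies $\mathcal H(A)\le P(E,X)/\alpha<\infty$. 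Apply Lemma \ref{lem:uniform convergence} to $A$ to obtain an open $U_1\supset A$ with $\capa_1(U_1)\le C\Vert u\Vert_{N^{1,1}(X)}+\eps/K$ on whose complement $r\mathcal H(A\cap B(x,r))/\mu(B(x,r))\to 0$ uniformly. Let $U_0:=\{u>t/2\}$, which is open, contains $E\supset G$, and satisfies $\capa_1(U_0)\le 8\Vert u\Vert_{N^{1,1}(X)}$ via the test function $\min(1,2u/t)$; using Remark \ref{rmk:capacities and Hausdorff contents}, enclose the capacity-zero set of non-Lebesgue points of $u$ in an open $U_2$ with $\capa_1(U_2)<\eps/K$. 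Set $U:=U_0\cup U_1\cup U_2$. Then $U$ is open, contains $G$, and for $K$ sufficiently large $\capa_1(U)\le C\capa_1(G)+\eps$.

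For $x\in X\setminus U$, $x$ is a Lebesgue point of $u$ with $u(x)\le t/2$, and $x\notin A$, so $x\in O_E\cup I_E$. The possibility $x\in I_E$ is ruled out because $u\ge t\ch_E$ would give $u_{B(x,r)}\ge t\mu(B(x,r)\cap E)/\mu(B(x,r))\to t$ and contradict $u(x)\le t/2$ at the Lebesgue point; hence $x\in O_E$. For any ball $B=B(x,r)$, the relative isoperimetric inequality provides
\[
\min\!\bigl(\mu(B\cap E),\,\mu(B\setminus E)\bigr)\le CrP(E,B(x,2\lambda r))\le Cr\mathcal H(A\cap B(x,2\lambda r)),
\]
and dividing by $\mu(B)$ (absorbing a doubling factor to replace $\mu(B)$ by $\mu(B(x,2\lambda r))$) yields the dichotomy
\[
\frac{\mu(B\cap E)}{\mu(B)}\in[0,\delta]\cup[1-\delta,1]\quad\text{whenever}\quad\frac{(2\lambda r)\mathcal H(A\cap B(x,2\lambda r))}{\mu(B(x,2\lambda r))}<\frac{\delta}{C}.
\]
Lemma \ref{lem:uniform convergence} supplies a uniform $r_0=r_0(\delta)$ below which the hypothesis on the right holds for all $x\in X\setminus U_1$; since $x\in O_E$ forces the density of $E$ at $x$ to tend to $0$ as $r\to 0$, a continuity-in-$r$ argument (using that $r\mapsto\mu(B(x,r))$ is continuous outside a countable set of values) prevents the density from jumping into the upper regime throughout $(0,r_0)$. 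Because $G\subset E$, the density of $G$ is also bounded by $\delta$ uniformly for small $r$, as required.

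The main obstacle is this last step: while the isoperimetric dichotomy at each fixed scale is immediate, propagating it uniformly down to $r=0$ on $O_E\setminus U_1$ requires ruling out density ``jumps'' from the lower regime to the upper, which is handled by the a.e.\ continuity of $r\mapsto\mu(B(x,r))$ combined with the fact that the dichotomy itself forbids intermediate densities.
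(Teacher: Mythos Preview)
Your route---level set $E$ of a capacitary potential, $A=\partial^*E$, then the relative isoperimetric inequality---is genuinely different from the paper's. The paper covers $G$ by balls $\{B(x_k,r_k)\}$ with $\sum_k\mu(B_k)/r_k\le C\capa_1(G)+\eps$, adjusts the radii so that the spheres $\partial B(x_k,\widetilde r_k)$ have controlled $\mathcal H$-measure, takes $A=\bigcup_k\partial B(x_k,\widetilde r_k)$, applies Lemma~\ref{lem:uniform convergence}, and converts back by the direct estimate $\mu(B(x,r)\cap G)\le\sum_{\widetilde B_k\cap B(x,r)\ne\emptyset}\mu(\widetilde B_k)\le Cr\,\mathcal H(A\cap B(x,3r))$, with no isoperimetric step. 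Your strategy is sound, but the step you yourself flag as the main obstacle is not closed by the argument you give.

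In the generality of the paper, spheres $\{y:d(x,y)=r\}$ may carry positive $\mu$-measure, so $f(r):=\mu(B(x,r)\cap E)/\mu(B(x,r))$ is only left-continuous; left-continuity together with the dichotomy $f(r)\in[0,\delta]\cup[1-\delta,1]$ does \emph{not} forbid a jump into the upper regime (for instance $f\equiv 0$ on $(0,s]$ and $f\equiv 1$ on $(s,r_0)$ is left-continuous), and ``continuous outside a countable set of $r$'' does not help here. The correct replacement is a doubling connectedness-of-scales argument: fix $\delta_0<1/(1+C_d)$ and the corresponding uniform scale $r_0$; if $f(s)\le\delta_0$ for some $s<r_0$, then for $s'\in[s,2s]$ one has $\mu(B(x,s')\setminus E)\ge(1-\delta_0)\mu(B(x,s))\ge(1-\delta_0)\mu(B(x,s'))/C_d$, whence $f(s')<1-\delta_0$ and so $f(s')\le\delta_0$ by the dichotomy; the analogous estimate works for $s'\in[s/2,s]$. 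Iterating gives $f\le\delta_0$ on all of $(0,r_0)$, and since $x\in O_E$ supplies some initial $s$, the bound is uniform in $x$; smaller $\delta$ then follows by nesting the dichotomy at level $\delta$ inside $(0,r_0)$. (A lesser point: an $N^{1,1}$ near-minimizer need not have a lower semicontinuous representative still satisfying $u\ge 1$ on $G$; bypass this by first enlarging $G$ to an open set via the outer-capacity property in Remark~\ref{rmk:capacities and Hausdorff contents}, or by invoking Lemma~\ref{lem:perimeter of G} to produce an open $E\supset G$ with $P(E,X)\le C\capa_1(G)$ directly.)
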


\begin{proof}
We can assume that $\capa_1(G)<\infty$. By Remark \ref{rmk:capacities and Hausdorff contents}, we have
\[
\mathcal H_{\diam(X)/10}(G)\le C\capa_1(G).
\]
(Of course we may have $\diam(X)/10=\infty$.)
Thus we can pick a covering $\{B(x_k,r_k)\}_{k\in\N}$ of $G$ with $r_k\le \diam(X)/10$ for all $k\in\N$ and
\begin{equation}\label{eq:capacity cover of G}
\sum_{k\in\N}\frac{\mu(B(x_k,r_k))}{r_k}\le C\capa_1(G)+\eps.
\end{equation}
For any fixed $k\in\N$, consider the following three properties.
\begin{enumerate}
\item By \cite[Lemma 6.2]{KKST} (or more precisely its proof) we have
\[
\frac{\mu(B(x_k,\rho))}{\rho}\le C P(B(x_k,\rho),X)
\]
for every $\rho\in [r_k,2r_k]$; note that here we need the fact that $r_k\le \diam(X)/10$.
\item By applying the $\BV$ coarea formula \eqref{eq:coarea} with $u(y)=\dist(y,x_k)$
 and $\Omega=B(x_k,2r_k)$, we  have $P(B(x_k,\rho),X)<\infty$ for almost every $\rho\in [r_k,2r_k]$.
\item By applying the coarea inequality given in Lemma \ref{lem:coarea inequality}
with $w(y)=\dist(y,x_k)$ and $U=B(x_k,2r_k)$, we conclude that there exists $T\subset [r_k,2r_k]$ with $\mathcal L^1(T)\ge r_k/2$
such that
\[
\mathcal H(\partial B(x_k,\rho))
\le 2C_{\textrm{co}}\frac{\mu(B(x_k,2r_k))}{r_k}
\le 2C_{\textrm{co}}C_d\frac{\mu(B(x_k,\rho))}{\rho}
\]
for every $\rho\in T$.
\end{enumerate}
Thus for each $k\in\N$ we can find a radius $\widetilde{r}_k\in [r_k,2r_k]$ with
\begin{equation}\label{eq:Hausdorff measures of boundaries of balls}
\begin{split}
\mathcal H(\partial B(x_k,\widetilde{r}_k))
&\le C\frac{\mu(B(x_k,\widetilde{r}_k))}{\widetilde{r}_k}\\
&\le C P(B(x_k,\widetilde{r}_k),X)\le C\mathcal H(\partial B(x_k,\widetilde{r}_k)),
\end{split}
\end{equation}
where the last inequality follows from \eqref{eq:def of theta}.
Let $A:=\bigcup_{k\in\N}\partial B(x_k,\widetilde{r}_k)$,
so that by the above and \eqref{eq:capacity cover of G},
\begin{align*}
\mathcal H(A)
\le \sum_{k\in\N}\mathcal H(\partial B(x_k,\widetilde{r}_k))
&\le C\sum_{k\in\N}\frac{\mu(B(x_k,\widetilde{r}_k))}{\widetilde{r}_k}\\
&\le C\sum_{k\in\N}\frac{\mu(B(x_k,r_k))}{r_k}\le C\capa_1(G)+C\eps.
\end{align*}
Note that if for any given ball $B(x,r)$ we have $\mathcal H(\partial B(x,r))<\infty$, then for any $y\in X$ we have $\mathcal H(\partial B(x,r)\cap \partial B(y,s))=0$ for almost every $s>0$.
Thus we can pick the radii $\widetilde{r}_k$ recursively in such a way that we also have $\mathcal H(\partial B(x_k,\widetilde{r}_k)\cap\partial B(x_l,\widetilde{r}_l))=0$ whenever $k\neq l$.

Then take a set $U\supset A$ with
\[
\capa_1(U)\le C\mathcal H(A)+\eps\le C\capa_1(G)+C\eps.
\]
as given by Lemma \ref{lem:uniform convergence}.
We can assume
that also $U\supset \bigcup_{k\in\N}B(x_k,2\widetilde{r}_k)$, since by Remark \ref{rmk:capacities and Hausdorff contents} and \eqref{eq:capacity cover of G},
\begin{align*}
\capa_1\left(\bigcup_{k\in\N}B(x_k,2\widetilde{r}_k)\right)
&\le C\mathcal H_{\diam(X)/5}\left(\bigcup_{k\in\N}B(x_k,2\widetilde{r}_k)\right)\\
&\le C\sum_{k\in\N}\frac{\mu(B(x_k,2 \widetilde{r}_k))}{2\widetilde{r}_k}\\
&\le C\sum_{k\in\N}\frac{\mu(B(x_k,r_k))}{r_k}\\
&\le C\capa_1(G)+C\eps.
\end{align*}
Let $x\in X\setminus U$. If for $r>0$ we have $B(x_k,\widetilde{r}_k)\cap B(x,r)\neq \emptyset$, then since $B(x_k,2\widetilde{r}_k)\subset U$, we have $\widetilde{r}_k\le \dist(B(x_k,\widetilde{r}_k),X\setminus U)\le r$.
Denoting $\widetilde{B}_k:=B(x_k,\widetilde{r}_k)$, we have
\begin{align*}
\limsup_{r\to 0}\frac{\mu(B(x,r)\cap G )}{\mu(B(x,r))}
&\le \frac{\mu\left(B(x,r)\cap \bigcup_{k\in\N}\widetilde{B}_k \right)}{\mu(B(x,r))}\\
&\le r\frac{\sum_{\widetilde{B}_k\cap  B(x,r)\neq \emptyset}\mu(\widetilde{B}_k)/\widetilde{r}_k}{\mu(B(x,r))}\\
&\overset{\eqref{eq:Hausdorff measures of boundaries of balls}}{\le} Cr\frac{\sum_{\widetilde{B}_k\cap  B(x,r)\neq \emptyset}\mathcal H(\partial \widetilde{B}_k)}{\mu(B(x,r))}\\
&\le Cr\frac{\mathcal H(A\cap B(x,3r))}{\mu(B(x,r))}\\
&\to 0
\end{align*}
uniformly as $r\to 0$ by Lemma \ref{lem:uniform convergence}.
\end{proof}

The following lemma can be proved by very similar methods as those used above.

\begin{lemma}[{\cite[Lemma 3.1]{L}}]\label{lem:perimeter of G}
For any $G\subset X$, we can find an open set $U\supset G$ with
$\capa_1(U)\le C\capa_1(G)$ and $P(U,X)\le C\capa_1(G)$.
\end{lemma}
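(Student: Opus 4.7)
The plan is to closely mimic the construction already carried out in the proof of Lemma \ref{lem:uniform convergence of G}. We may assume $\capa_1(G)<\infty$, since otherwise we can take $U=X$. Fix an auxiliary $\eps>0$; the final constant $C$ will absorb the resulting additive error (or the conclusion is understood modulo such an $\eps$, as in the preceding lemmas). By the estimate $\mathcal H_{\diam(X)/10}(G)\le C\capa_1(G)$ from Remark \ref{rmk:capacities and Hausdorff contents}, I choose a cover $\{B(x_k,r_k)\}_{k\in\N}$ of $G$ with $r_k\le \diam(X)/10$ and
\[
\sum_{k\in\N}\frac{\mu(B(x_k,r_k))}{r_k}\le C\capa_1(G)+\eps.
\]

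Next, for each $k$ I select a radius $\widetilde r_k\in[r_k,2r_k]$ exactly as in the proof of Lemma \ref{lem:uniform convergence of G}: using the estimate \cite[Lemma 6.2]{KKST} (property 1), the $\BV$ coarea formula \eqref{eq:coarea} applied to $y\mapsto \dist(y,x_k)$ (property 2), and the coarea inequality of Lemma \ref{lem:coarea inequality} (property 3), so that
\[
P(B(x_k,\widetilde r_k),X)\le C\frac{\mu(B(x_k,\widetilde r_k))}{\widetilde r_k}\le C\frac{\mu(B(x_k,r_k))}{r_k}.
\]
Then I set $U:=\bigcup_{k\in\N}B(x_k,\widetilde r_k)$, which is open and contains $G$ since $B(x_k,r_k)\subset B(x_k,\widetilde r_k)$.

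For the capacity bound, Remark \ref{rmk:capacities and Hausdorff contents} together with countable subadditivity of $\mathcal H_{\diam(X)/5}$ yields
\[
\capa_1(U)\le C\mathcal H_{\diam(X)/5}(U)\le C\sum_{k\in\N}\frac{\mu(B(x_k,2\widetilde r_k))}{2\widetilde r_k}\le C\sum_{k\in\N}\frac{\mu(B(x_k,r_k))}{r_k}\le C\capa_1(G)+C\eps,
\]
where the doubling condition was used to pass from $2\widetilde r_k$ to $r_k$. For the perimeter bound, I extend the finite subadditivity \eqref{eq:Caccioppoli sets form an algebra} to the countable union by setting $U_N:=\bigcup_{k=1}^N B(x_k,\widetilde r_k)$, noting that $\ch_{U_N}\to \ch_U$ in $L^1_{\loc}(X)$, and invoking the lower semicontinuity of the total variation:
\[
P(U,X)\le \liminf_{N\to\infty}P(U_N,X)\le \sum_{k\in\N}P(B(x_k,\widetilde r_k),X)\le C\sum_{k\in\N}\frac{\mu(B(x_k,r_k))}{r_k}\le C\capa_1(G)+C\eps.
\]

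The only mild subtlety is the passage from the finite subadditivity \eqref{eq:Caccioppoli sets form an algebra} to a countable union, which is handled by the lower-semicontinuity argument above; every other ingredient is already supplied by the construction of Lemma \ref{lem:uniform convergence of G}, so that the present proof is essentially a shorter variant of it.
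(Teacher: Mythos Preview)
The paper itself does not supply a proof; it merely cites \cite[Lemma 3.1]{L} and remarks that the result ``can be proved by very similar methods as those used above,'' referring to Lemma~\ref{lem:uniform convergence of G}. Your proposal does exactly this: it reuses the ball-selection argument (properties 1--3) from that proof to obtain radii $\widetilde r_k\in[r_k,2r_k]$ with controlled perimeter, and then combines finite subadditivity \eqref{eq:Caccioppoli sets form an algebra} with lower semicontinuity of the total variation to pass to the countable union, which is the standard and correct device. This is precisely the approach the paper intends.

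One point you leave slightly hazy is the additive $\eps$. When $\capa_1(G)>0$ you may simply take $\eps=\capa_1(G)$ and absorb the error into $C$, so the statement holds as written. When $\capa_1(G)=0$ but $G\neq\emptyset$ the literal inequality $\capa_1(U)\le 0$ cannot hold for any nonempty open $U$, so the lemma should be read either modulo an arbitrary $\eps>0$ (as you suggest, and as in the preceding lemmas) or under the tacit assumption $\capa_1(G)>0$; this is a feature of the statement itself rather than a defect of your argument.
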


The next lemma gives a standard fact about the relationship between Hausdorff content and measure.

\begin{lemma}\label{lem:Hausdorff content and measure}
Let $A\subset X$ and $R>0$. If $\mathcal H_R(A)=0$, then $\mathcal H(A)=0$.
\end{lemma}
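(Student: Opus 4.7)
The plan is to route the argument through the $1$-capacity, using the two facts recorded in Remark~\ref{rmk:capacities and Hausdorff contents}. First, by the second inequality stated in that remark,
\[
\capa_1(A) \le C(C_d, C_P, \lambda, R)\,\mathcal H_R(A),
\]
the hypothesis $\mathcal H_R(A) = 0$ forces $\capa_1(A) = 0$. Second, by the equivalence stated at the beginning of the same remark, $\capa_1(A) = 0$ is equivalent to $\mathcal H(A) = 0$. Composing these two implications immediately yields the conclusion.

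There is no substantive obstacle here; all the heavy lifting has already been done in the cited references \cite{HaKi} and \cite{KKST3}. The only point worth flagging is that the constant in the first inequality depends on the chosen scale $R$, but since $R$ is fixed by hypothesis this dependence is harmless. If instead one tried to argue directly, starting from a nearly optimal cover $\{B(x_i,r_i)\}$ with $r_i\le R$ realizing $\mathcal H_R(A)<\eps$ and refining it into a cover by balls of radius at most $r<R$, one would need to subdivide each $B(x_i,r_i)$ using the doubling property. The difficulty is that splitting a ball of radius $r_i$ into balls of radius $\delta r_i$ generically amplifies the sum $\sum_i \mu(B_i)/r_i$ by a factor of order $\delta^{-1}$, so it is not obvious how to keep the refined sum small. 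This is precisely the scale issue that the Haj\l asz--Kinnunen-type estimate behind Remark~\ref{rmk:capacities and Hausdorff contents} resolves, via the Poincar\'e inequality, and it is why the capacity detour is the natural route.
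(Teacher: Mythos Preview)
Your argument is correct: the two facts recorded in Remark~\ref{rmk:capacities and Hausdorff contents} combine to give the result immediately. The paper, however, takes a more elementary and self-contained route that uses only the doubling property, not the Poincar\'e inequality. After reducing to bounded $A\subset B(x_0,R_0)$, the paper notes that any cover $\{B(x_j,r_j)\}$ witnessing $\mathcal H_R(A)<\eps$ automatically has all its radii small: the lower mass bound~\eqref{eq:homogenous dimension} with $Q>1$ gives $r_j^{Q-1}\le C\,\mu(B(x_j,r_j))/r_j<C\eps$, so $r_j<\delta_\eps$ for some $\delta_\eps\to 0$ as $\eps\to 0$. Thus the \emph{same} cover already shows $\mathcal H_{\delta_\eps}(A)<\eps$, and no subdivision of balls is ever needed --- the concern you raise about refining covers and amplifying $\sum_j\mu(B_j)/r_j$ simply does not arise. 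The trade-off is that your route is shorter (one line, given the remark) but imports the Poincar\'e inequality through the cited results in \cite{HaKi} and \cite{KKST3}, while the paper's direct argument is a few lines longer but shows that the lemma is really a pure doubling fact.
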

Note that the converse implication is trivial.
In \cite[Lemma 7.9]{KKST3} it was shown that we  have the above even for $R=\infty$, under the additional assumption that the space is \emph{$1$-hyperbolic}, but we do not 
need to consider this assumption in this paper.

\begin{proof}
We can assume that $A$ is bounded, and so $A\subset B(x_0,R_0)$ for some
$x_0\in X$ and $R_0\ge R$.
Fix $\eps>0$. By the fact that $\mathcal H_R(A)=0$, we can find a covering $\{B(x_j,r_j)\}_{j\in\N}$ of $A$ such that $r_j\le R$ for all $j\in\N$ and
\[
\sum_{j\in\N} \frac{\mu(B(x_j,r_j))}{r_j}<\eps.
\]
We can also assume that $B(x_j,r_j)\cap A\neq \emptyset$ for all $j\in\N$, and so $x_j\in B(x_0,2R_0)$ for all $j\in\N$. Note that we can choose $Q>1$ in \eqref{eq:homogenous dimension}. Then for each $j\in\N$ we have
\[
\frac{\mu(B(x_0,2R_0))}{(2R_0)^Q}r_j^{Q-1}\le C\frac{\mu(B(x_j,r_j))}{r_j}<C\eps,
\]
so that
\[
r_j<\left(C\eps\frac{(2R_0)^Q}{\mu(B(x_0,2R_0))}\right)^{1/(Q-1)}=:\delta_{\eps},
\]
so in fact we have
\[
\mathcal H_{\delta_{\eps}}(A)\le \sum_{j\in\N} \frac{\mu(B(x_j,r_j))}{r_j}<\eps.
\]
Here $\delta_{\eps}\to 0$
as $\eps\to 0$. Thus $\mathcal H(A)=\lim_{\eps\to 0}\mathcal H_{\delta_{\eps}}(A)=0$.
\end{proof}

The following lemma is well known e.g. in the Euclidean setting. We will only use it in the special case of sets of finite perimeter, but we give the standard proof for more general $\BV$ functions.

\begin{lemma}\label{lem:variation measure and Hausdorff content}
Let $\Omega\subset X$ be an open set,
let $u\in L^1_{\loc}(\Omega)$ with $\Vert Du\Vert(\Omega)<\infty$, and let $R>0$. Then for every $\eps>0$ there exists $\delta>0$ such that if $A\subset \Omega$ with $\mathcal H_{R}(A)<\delta$, then $\Vert Du\Vert(A)<\eps$.
\end{lemma}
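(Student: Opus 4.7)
The plan is to reduce the statement to the absolute continuity of $\Vert Du\Vert$ with respect to the codimension $1$ Hausdorff measure $\mathcal H$, and then deduce the quantitative conclusion from Lemma~\ref{lem:Hausdorff content and measure} combined with the finiteness of $\Vert Du\Vert(\Omega)$ through a Borel--Cantelli style argument.

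First I would establish that $\Vert Du\Vert$ is absolutely continuous with respect to $\mathcal H$ on Borel subsets of $\Omega$: if $B\subset\Omega$ is a Borel set with $\mathcal H(B)=0$, then by the coarea formula \eqref{eq:coarea} the superlevel set $\{u>t\}$ has finite perimeter in $\Omega$ for almost every $t\in\R$, and for each such $t$ the representation \eqref{eq:def of theta} gives
\[
P(\{u>t\},B)\le C_d\,\mathcal H(\partial^*\{u>t\}\cap B)=0.
\]
A second application of \eqref{eq:coarea}, valid on the Borel set $B$ because $\Vert Du\Vert(\Omega)<\infty$, then yields $\Vert Du\Vert(B)=0$.

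Next I would argue by contradiction. If the lemma fails, there exist $\eps_0>0$ and sets $A_n\subset \Omega$ with $\mathcal H_R(A_n)<2^{-n}$ but $\Vert Du\Vert(A_n)\ge \eps_0$ for every $n\in\N$. By the definition of $\mathcal H_R$, for each $n$ I can choose a cover $\{B(x_{n,i},r_{n,i})\}_{i\in\N}$ of $A_n$ with $r_{n,i}\le R$ and
\[
\sum_{i\in\N}\frac{\mu(B(x_{n,i},r_{n,i}))}{r_{n,i}}<2^{-n}.
\]
Setting $V_n:=\Omega\cap\bigcup_{i\in\N}B(x_{n,i},r_{n,i})$ produces an open subset of $\Omega$ containing $A_n$ with $\mathcal H_R(V_n)\le 2^{-n}$. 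Defining
\[
V:=\bigcap_{N\in\N}\bigcup_{n\ge N}V_n,
\]
countable subadditivity of $\mathcal H_R$ gives $\mathcal H_R(V)\le \sum_{n\ge N}2^{-n}\to 0$, so $\mathcal H_R(V)=0$. Lemma~\ref{lem:Hausdorff content and measure} upgrades this to $\mathcal H(V)=0$, and the preceding paragraph then gives $\Vert Du\Vert(V)=0$.

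To close the contradiction, monotonicity gives
\[
\Vert Du\Vert\Bigl(\bigcup_{n\ge N}V_n\Bigr)\ge \Vert Du\Vert(V_N)\ge \Vert Du\Vert(A_N)\ge \eps_0
\]
for every $N$, and continuity from above for the finite Radon measure $\Vert Du\Vert$ along the decreasing sequence of open sets $\bigcup_{n\ge N}V_n\searrow V$ forces $\Vert Du\Vert(V)\ge \eps_0$, contradicting $\Vert Du\Vert(V)=0$. The only delicate step is the absolute continuity claim in the first paragraph, but it is immediate from \eqref{eq:def of theta} and \eqref{eq:coarea} once one restricts to the full-measure set of $t$ for which $\{u>t\}$ has finite perimeter in $\Omega$.
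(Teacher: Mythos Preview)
Your proof is correct and relies on the same key ingredients as the paper's: the absolute continuity of $\Vert Du\Vert$ with respect to $\mathcal H$ (via the coarea formula and \eqref{eq:def of theta}), Lemma~\ref{lem:Hausdorff content and measure}, and a Borel--Cantelli argument. The organization differs slightly: the paper runs the Borel--Cantelli step level by level, showing for almost every $t$ that $\mathcal H|_{\partial^*\{u>t\}\cap\Omega}(A)\to 0$ whenever $\mathcal H_R(A)\to 0$, and then integrates over $t$ using Lebesgue's dominated convergence theorem with majorant $t\mapsto P(\{u>t\},\Omega)$. You instead apply the Borel--Cantelli argument once, directly to the finite Radon measure $\Vert Du\Vert$, using continuity from above along the decreasing open sets $\bigcup_{n\ge N}V_n$. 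Your route is a bit more economical since it bypasses the dominated convergence step; the paper's route makes the levelwise mechanism explicit, but the two arguments are the same at heart.
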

\begin{proof}
By the $\BV$ coarea formula \eqref{eq:coarea}, for almost every $t\in \R$ we have $P(\{u>t\},\Omega)<\infty$, and by \eqref{eq:def of theta} we have
\[
\mathcal H(\partial^*\{u>t\}\cap\Omega)\le CP(\{u>t\},\Omega)
\]
for such $t$. Fix one such $t\in\R$. Assume that there exists $\delta>0$ and a sequence  of Borel sets $A_i$, $i\in\N$, such that $\mathcal H_{R}(A_i)\le 2^{-i}$ but $\mathcal H|_{\partial^*\{u>t\}\cap\Omega}(A_i)\ge \delta$. Then defining
\[
A:=\bigcap_{i\in\N}\bigcup_{j\ge i}A_j,
\]
we have $\mathcal H_{R}(A)=0$ but $\mathcal H(A)\ge \delta$, a contradiction
by Lemma \ref{lem:Hausdorff content and measure}. Thus for 
almost every $t\in\R$, $\mathcal H|_{\partial^*\{u>t\}\cap\Omega}(A)\to 0$ if $\mathcal H_{R}(A)\to 0$, for $A$ Borel.
 
By the coarea formula \eqref{eq:coarea},
\[
\Vert Du\Vert(A)=\int_{\R}P(\{u>t\},A)\,dt
\]
for any Borel set $A\subset \Omega$.
Here we have by \eqref{eq:def of theta} again that $P(\{u>t\},A)\le C\mathcal H(\partial^*\{u>t\}\cap A)$ for almost every $t\in\R$.
By using Lebesgue's dominated convergence theorem, with the majorant  function $t\mapsto P(\{u>t\},\Omega)$, we get $\Vert Du\Vert(A)\to 0$ if $\mathcal H_{R}(A)\to 0$, with $A$ Borel. The result for general sets $A\subset \Omega$ follows by approximation.
\end{proof}

\begin{lemma}\label{lem:variation measure and capacity}
Let $\Omega\subset X$ be an open set and
let $u\in L^1_{\loc}(\Omega)$ with $\Vert Du\Vert(\Omega)<\infty$. Then for every $\eps>0$ there exists $\delta>0$ such that if $A\subset \Omega$ with $\capa_1 (A)<\delta$, then $\Vert Du\Vert(A)<\eps$.
\end{lemma}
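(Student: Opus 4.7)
The plan is to reduce this statement directly to Lemma \ref{lem:variation measure and Hausdorff content}, exploiting the quantitative comparison between $1$-capacity and the restricted Hausdorff content recorded in Remark \ref{rmk:capacities and Hausdorff contents}. Since that remark provides, for any fixed $R>0$, a constant $C=C(C_d,C_P,\lambda,R)$ such that
\[
\mathcal H_R(A)\le C\capa_1(A)\qquad\text{for every }A\subset X,
\]
any capacity smallness assumption immediately converts into Hausdorff-content smallness at the same scale, and then Lemma \ref{lem:variation measure and Hausdorff content} supplies the desired bound on $\Vert Du\Vert(A)$.

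Concretely, I would fix $\eps>0$, choose once and for all some convenient radius $R>0$ (for instance $R=1$), and apply Lemma \ref{lem:variation measure and Hausdorff content} at this $R$ to obtain a threshold $\delta'>0$ with the property that $\Vert Du\Vert(A)<\eps$ whenever $A\subset\Omega$ satisfies $\mathcal H_R(A)<\delta'$. Then, with the constant $C=C(C_d,C_P,\lambda,R)$ from Remark \ref{rmk:capacities and Hausdorff contents}, I would set $\delta:=\delta'/C$. For any $A\subset\Omega$ with $\capa_1(A)<\delta$ we get
\[
\mathcal H_R(A)\le C\capa_1(A)<C\delta=\delta',
\]
and hence $\Vert Du\Vert(A)<\eps$ by the choice of $\delta'$.

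There is essentially no obstacle here once Lemma \ref{lem:variation measure and Hausdorff content} is in place; the only point worth verifying is that the constant $C$ in the capacity/Hausdorff-content comparison is finite for the chosen $R$, which is exactly what Remark \ref{rmk:capacities and Hausdorff contents} guarantees (the dependence on $R$ is harmless because $R$ is fixed before $\delta$ is chosen). Thus the lemma follows by a one-line application of the two earlier results.
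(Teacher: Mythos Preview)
Your proposal is correct and matches the paper's own proof exactly: the paper simply says to combine Remark \ref{rmk:capacities and Hausdorff contents} with Lemma \ref{lem:variation measure and Hausdorff content}, which is precisely the reduction you describe.
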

\begin{proof}
Combine Remark \ref{rmk:capacities and Hausdorff contents} and Lemma \ref{lem:variation measure and Hausdorff content}.
\end{proof}

\section{Quasicontinuity}

In this section we present and slightly generalize the quasicontinuity-type result for $\BV$ functions given in \cite{LaSh}.

In the Euclidean setting, results on the fine properties of $\BV$ functions can be formulated in terms of the lower and upper approximate limits $u^{\wedge}$ and $u^{\vee}$
given in \eqref{eq:lower approximate limit} and \eqref{eq:upper approximate limit}. In the metric setting, we need to consider more than two jump values.
Recall the definition of the number $\gamma$ from \eqref{eq:definition of gamma}.
Then we define the functions $u^l$, $l=1,\ldots,n:=\lfloor 1/\gamma\rfloor$, as 
follows: $u^1:=u^{\wedge}$, $u^n:=u^{\vee}$, 
and for $l=2,\ldots,n-1$ we define inductively
\begin{equation}\label{eq:definition of n jump values}
u^{l}(x):=\sup\left\{t\in\overline{\R}:\,\lim_{r\to 0}\frac{\mu(B(x,r)\cap \{u^{l-1}(x)+\delta<u<t\})}{\mu(B(x,r))}=0\ \ \forall\, \delta>0\right\}
\end{equation}
provided $u^{l-1}(x)<u^\vee(x)$, and otherwise we set 
$u^l(x)=u^{\vee}(x)$. 
It can be shown that each $u^l$ is a Borel function,
and $u^{\wedge}=u^1\le \ldots \le u^n = u^{\vee}$.

We have the following notion of quasicontinuity for $\BV$ functions.

\begin{theorem}[{\cite[Theorem~1.1]{LaSh}}]\label{thm:quasicontinuity}
Let $u\in\BV(X)$ and let $\eps>0$. Then there exists an open set $G\subset X$ with $\capa_1(G)<\eps$ such that if $y_k\to x$ 
with $y_k,x\in X\setminus G$, then 
\[
\min_{l_2\in \{1,\ldots,n\}} |u^{l_1}(y_k)-u^{l_2}(x)|\to 0
\]
for each $l_1=1,\ldots,n$.
\end{theorem}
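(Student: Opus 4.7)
The plan is to reduce the multi-valued approximate continuity statement to uniform control of countably many superlevel sets, using the coarea formula together with Lemma \ref{lem:uniform convergence}. Set $E_t := \{u>t\}$. By \eqref{eq:coarea} combined with \eqref{eq:def of theta},
\[
\int_{\R} \mathcal H(\partial^* E_t)\,dt \;\le\; C\Vert Du\Vert(X) \;<\; \infty,
\]
so for a.e.\ $t\in\R$ the set $E_t$ has finite perimeter with $\mathcal H(\partial^* E_t)<\infty$. I would first pick from these admissible values a countable dense sequence $\{t_j\}_{j\in\N}\subset\R$, and for each $j$ apply Lemma \ref{lem:uniform convergence} to $A_j:=\partial^* E_{t_j}$, obtaining an open set $U_j\supset A_j$ with $\capa_1(U_j)\le C\mathcal H(A_j)+\eps\,2^{-j}$ such that
\[
r\,\frac{\mathcal H(A_j\cap B(x,r))}{\mu(B(x,r))}\;\to\; 0\quad\text{uniformly for }x\in X\setminus U_j.
\]
After reweighting the $t_j$'s so that $\sum_j \mathcal H(A_j)2^{-j}$ is finite and rescaling $\eps$, the set $G := \bigcup_j U_j$ has $\capa_1(G)<\eps$ by subadditivity.

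Next I would convert the uniform codimension-$1$ boundary decay into a uniform density dichotomy for the sets $E_{t_j}$ themselves. By \eqref{eq:def of theta}, $P(E_{t_j},B(x,r))\le C_d \mathcal H(A_j\cap B(x,r))$, and the relative isoperimetric inequality implied by the $(1,1)$-Poincar\'e inequality then forces
\[
\min\!\left(\frac{\mu(E_{t_j}\cap B(x,r))}{\mu(B(x,r))},\;\frac{\mu(B(x,r)\setminus E_{t_j})}{\mu(B(x,r))}\right)\;\to\; 0
\]
uniformly for $x\in X\setminus U_j$ as $r\to 0$. Hence at every scale $r$ below a uniform threshold $r_0$, the $E_{t_j}$-density in $B(x,r)$ is close to $0$ or to $1$, with no intermediate values; moreover, this dichotomy is stable across scales at a fixed $x$, since any oscillation between the two branches would place $x$ in $\partial^* E_{t_j}\subset U_j$. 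Consequently, for every $x\in X\setminus G$ the density of $E_{t_j}$ at $x$ is in $\{0,1\}$, and for $y_k\to x$ with $y_k\in X\setminus G$, a doubling-and-inclusion argument using the uniform $r_0$ forces the density of $E_{t_j}$ at $y_k$ eventually to equal the density at $x$, for each fixed $j$. This gives stability of the local ``density signature'' $j\mapsto \mathbf{1}_{I_{E_{t_j}}}(\cdot)$ along the sequence.

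The final step is to translate this stability into $\min_{l_2}|u^{l_1}(y_k)-u^{l_2}(x)|\to 0$. The jump values $u^l$ are controlled by the locations where the density signature transitions across the dense set $\{t_j\}$, so a contradiction argument closes the proof: if some $u^{l_1}(y_k)$ stayed bounded away from every $u^{l_2}(x)$, then density of $\{t_j\}$ would produce a $t_j$ across which the signatures at $y_k$ and $x$ would have to disagree infinitely often, contradicting the stabilization. The main obstacle is handling the intermediate jump values $u^2,\ldots,u^{n-1}$: the recursive definition \eqref{eq:definition of n jump values} depends on the \emph{relative weights} of the slabs $\{u^{l-1}(x)+\delta<u<t\}$ rather than on the $0/1$ density of a single superlevel set, so the stabilization argument must be refined to track the masses of such slabs at nearby points, which is where the bulk of the technical work in \cite{LaSh} is concentrated.
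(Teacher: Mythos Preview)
First, note that the paper does not supply its own proof of Theorem~\ref{thm:quasicontinuity}: the result is simply quoted from \cite[Theorem~1.1]{LaSh}, so there is no in-paper argument to compare against. I will therefore assess your proposal on its own merits.

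There is a genuine gap at the capacity step. Lemma~\ref{lem:uniform convergence} produces an open set $U_j$ that \emph{contains} $A_j=\partial^*E_{t_j}$, and its capacity bound is
\[
\capa_1(U_j)\le C\,\mathcal H(\partial^*E_{t_j})+\eps\,2^{-j},
\]
which is additive in $\mathcal H(A_j)$, not multiplicative. Your ``reweighting'' of the $t_j$'s cannot turn $C\sum_j \mathcal H(A_j)$ into something small: for a generic $u\in\BV(X)$ one has $\mathcal H(\partial^*E_t)$ bounded below on an interval of $t$'s (think of $u=\ch_E$, where $\mathcal H(\partial^*E_t)=\mathcal H(\partial^*E)$ for every $t\in(0,1)$), so any dense choice of levels forces $\sum_j \capa_1(U_j)=\infty$. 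More fundamentally, your set $G=\bigcup_j U_j$ contains $\bigcup_j \partial^*E_{t_j}$ and hence the jump set $S_u=\{u^{\wedge}\neq u^{\vee}\}$, whose $1$-capacity is in general comparable to $\Vert Du\Vert(X)$ via Remark~\ref{rmk:capacities and Hausdorff contents} and \eqref{eq:def of theta}, not to $\eps$.

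This is not just a bookkeeping issue but a conceptual one. The representatives $u^1,\ldots,u^n$ and the $\min_{l_2}$ in the conclusion are there precisely to accommodate points of $S_u$: by \eqref{eq:definition of gamma}, at $\mathcal H$-a.e.\ jump point the superlevel densities stay in $[\gamma,1-\gamma]$, so your ``density dichotomy'' \emph{must fail} there. By placing all of $S_u$ inside $G$, your argument collapses the statement to ordinary quasicontinuity of $u^{\wedge}=u^{\vee}$ off the jump set, which is both weaker than Theorem~\ref{thm:quasicontinuity} and unattainable with $\capa_1(G)<\eps$. The proof in \cite{LaSh} has to keep jump points \emph{outside} the exceptional set and instead show that the finite collection of jump values varies in the stated multi-valued way; your outline does not provide a mechanism for this.
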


First we give a local version of this result, as follows.

\begin{corollary}\label{cor:quasicontinuity local case}
Let $\Omega\subset X$ be an open set,
let $u\in\BV_{\loc}(\Omega)$, and let $\eps>0$. Then there exists an open set $G\subset \Omega$ with $\capa_1(G)<\eps$ such that if $y_k\to x$ 
with $y_k,x\in \Omega\setminus G$, then 
\[
\min_{l_2\in \{1,\ldots,n\}} |u^{l_1}(y_k)-u^{l_2}(x)|\to 0
\]
for each $l_1=1,\ldots,n$.
\end{corollary}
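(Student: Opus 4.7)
The plan is to reduce the local statement to the global Theorem~\ref{thm:quasicontinuity} by exhausting $\Omega$ and multiplying $u$ by compactly supported Lipschitz cutoffs. Since $X$ is proper, I can choose open sets $\Omega_1 \Subset \Omega_2 \Subset \cdots$ with $\bigcup_i \Omega_i = \Omega$, together with Lipschitz cutoffs $\eta_i$ satisfying $0 \le \eta_i \le 1$, $\eta_i = 1$ on $\overline{\Omega_i}$, and $\supp \eta_i \subset \Omega_{i+1}$. Setting $v_i := u\eta_i$ (extended by zero outside $\supp \eta_i$), the standard Leibniz-type estimate combined with $u\in\BV(\Omega_{i+1})$ gives $v_i \in \BV(X)$.

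Next, I would apply Theorem~\ref{thm:quasicontinuity} to each $v_i$ with tolerance $\eps\, 2^{-i}$ to obtain an open set $G_i \subset X$ with $\capa_1(G_i) < \eps\, 2^{-i}$ satisfying the asymptotic property for $v_i$. I then define
\[
G := \bigcup_{i=1}^{\infty} (G_i \cap \Omega_i),
\]
an open subset of $\Omega$, with $\capa_1(G) < \eps$ by countable subadditivity of $\capa_1$.

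To check the asymptotic property for $u$, I fix $l_1\in\{1,\ldots,n\}$ and take $y_k\to x$ with $y_k,x\in \Omega\setminus G$. Picking $j$ with $x\in \Omega_j$, openness of $\Omega_j$ gives $y_k\in \Omega_j$ for all large $k$. Since $x,y_k\in \Omega_j\setminus G \subset X\setminus G_j$, Theorem~\ref{thm:quasicontinuity} applied to $v_j$ yields $\min_{l_2}|v_j^{l_1}(y_k)-v_j^{l_2}(x)|\to 0$. The key observation is that $\eta_j=1$ on the open set $\Omega_j$, so $v_j=u$ in a neighborhood of every point of $\Omega_j$; because each $u^l$ is defined purely in terms of local densities (cf.\ \eqref{eq:lower approximate limit}--\eqref{eq:upper approximate limit} and \eqref{eq:definition of n jump values}), this gives $v_j^l(z)=u^l(z)$ for every $z\in \Omega_j$ and every $l$, transferring the convergence from $v_j$ to $u$. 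The only points requiring care are the verification that $v_i\in\BV(X)$ and the identification $v_j^l=u^l$ on $\Omega_j$; both are routine, and I do not expect a serious obstacle, as the argument is essentially a bookkeeping reduction to the global quasicontinuity theorem.
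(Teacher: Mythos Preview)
Your proposal is correct and follows essentially the same route as the paper: exhaust $\Omega$ by $\Omega_j\Subset\Omega_{j+1}$, multiply by Lipschitz cutoffs to get $v_j=\eta_j u\in\BV(X)$ via the Leibniz rule, apply Theorem~\ref{thm:quasicontinuity} to each $v_j$ with tolerance $2^{-j}\eps$, and combine the exceptional sets. The only cosmetic difference is that you set $G=\bigcup_i(G_i\cap\Omega_i)$ while the paper uses $G=\bigcup_i G_i\cap\Omega$; your choice is slightly smaller and the verification that $z\in\Omega_j\setminus G$ implies $z\notin G_j$ goes through exactly as you wrote.
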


\begin{proof}
Pick sets $\Omega_1\Subset \Omega_2\Subset \ldots$ with $\Omega=\bigcup_{j\in\N}\Omega_j$.
Also pick cutoff functions $\eta_j\in\Lip_c(\Omega_{j+1})$ with $0\le\eta_j\le 1$
and  $\eta_j=1$ in $\Omega_j$ for
each $j\in\N$. Denote the Lipschitz constants by $L_j$. Fix $j\in\N$. We have $u\in \BV(\Omega_{j+1})$, so that we find a sequence $\liploc(\Omega_{j+1})\ni u_i\to u$ in $L_{\loc}^1(\Omega_{j+1})$
with
\[
\lim_{i\to\infty}\int_{\Omega_{j+1}}g_{u_i}\,d\mu=\Vert Du\Vert(\Omega_{j+1}).
\]
Recall that $g_{u_i}$ denotes the minimal $1$-weak upper gradient of $u_i$.
Clearly $\eta_j u_i\in \Lip(X)$ with $\eta_j u_i\to \eta_j u$ in $L^1(X)$
as $i\to\infty$.
Thus by the definition of the total variation and by the Leibniz rule for Newton-Sobolev functions, see \cite[Theorem 2.15]{BB}, we have
\begin{align*}
\Vert D(\eta_j u)\Vert (X)
&\le \liminf_{i\to\infty}\int_{X} g_{\eta_j u_i}\,d\mu\\
&\le \liminf_{i\to\infty}\int_{X} g_{\eta_j} |u_i|+\eta_j g_{u_i} \,d\mu\\
&\le \limsup_{i\to\infty}\int_{\supp(\eta_j)} L_j |u_i|\,d\mu+\limsup_{i\to\infty}\int_{\Omega_{j+1}} g_{u_i} \,d\mu\\
&\le  L_j \Vert u\Vert_{L^1(\Omega_{j+1})}+\Vert Du\Vert(\Omega_{j+1})<\infty.
\end{align*}
Thus $u_j:=\eta_j u\in\BV(X)$ for each $j\in\N$, and so we can apply Theorem \ref{thm:quasicontinuity} to obtain open sets $G_j\subset X$ with $\capa_1(G_j)< 2^{-j}\eps$.
Defining $G:=\bigcup_{j\in\N}G_j\cap\Omega$, we have $\capa_1(G)< \eps$, and if
$y_k\to x$ with $y_k,x\in \Omega\setminus G$, then $y_k,x\in\Omega_j$ for some $j\in\N$
and thus for large enough $k\in\N$
\[
\min_{l_2\in \{1,\ldots,n\}} |u^{l_1}(y_k)-u^{l_2}(x)|=\min_{l_2\in \{1,\ldots,n\}}
|(u\eta_j)^{l_1}(y_k)-(u\eta_j)^{l_2}(x)|\to 0
\]
as $k\to\infty$
for each $l_1=1,\ldots,n$, by the fact that $y_k,x\notin G_j$.

\end{proof}

Recall the definitions of the measure theoretic interior and exterior $I_E$ and $O_E$
of a set $E\subset X$ from \eqref{eq:definition of measure theoretic interior} and \eqref{eq:definition of measure theoretic exterior}. Note that for $u=\ch_E$, we have $x\in I_E$ if and only if $u^{\wedge}(x)=u^{\vee}(x)=1$, $x\in O_E$ if and only if $u^{\wedge}(x)=u^{\vee}(x)=0$, and $x\in \partial^*E$ if and only if $u^{\wedge}(x)=0$ and $u^{\vee}(x)=1$.
Moreover, in this case $u^1=u^{\wedge}$ and $u^2=\ldots =u^n=u^{\vee}$.

In this paper we will only need the following notion of quasicontinuity for sets of finite perimeter, which is obtained by
applying Corollary \ref{cor:quasicontinuity local case} to $u=\ch_E$.

\begin{corollary}\label{cor:quasicontinuity for sets of finite perimeter}
Let $\Omega\subset X$ be an open set, let $E\subset X$ be a $\mu$-measurable set with
$P(E,\Omega)<\infty$, and let $\eps>0$. Then there exists an open set $G\subset \Omega$ with $\capa_1(G)<\eps$ such that if $y_k\to x$ 
with $y_k,x\in \Omega\setminus G$, then 
\[
\min\{|\ch_E^{\wedge}(y_k)-\ch_E^{\wedge}(x)|,\,|\ch_E^{\wedge}(y_k)-\ch_E^{\vee}(x)|\}\to 0
\]
and
\[
\min\{|\ch_E^{\vee}(y_k)-\ch_E^{\wedge}(x)|,\,|\ch_E^{\vee}(y_k)-\ch_E^{\vee}(x)|\}\to 0.
\]
\end{corollary}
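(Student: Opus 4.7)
The plan is to apply Corollary \ref{cor:quasicontinuity local case} directly to $u := \ch_E$ and then translate the conclusion using the fact, already recorded in the paragraph preceding the statement, that for a characteristic function the $n$ jump values collapse to the two approximate limits.

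First I would check the hypotheses of the local quasicontinuity corollary. Since $\ch_E$ is bounded and $\mu$ is locally finite (by doubling and the properness of $X$), we have $\ch_E \in L^1_{\loc}(\Omega)$; combined with $\Vert D\ch_E\Vert(\Omega) = P(E,\Omega) < \infty$ this gives $\ch_E \in \BV_{\loc}(\Omega)$. Corollary \ref{cor:quasicontinuity local case} then produces an open set $G \subset \Omega$ with $\capa_1(G) < \eps$ such that whenever $y_k \to x$ with $y_k, x \in \Omega \setminus G$,
\[
\min_{l_2 \in \{1,\ldots,n\}} |u^{l_1}(y_k) - u^{l_2}(x)| \to 0
\]
for each $l_1 = 1, \ldots, n$.

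Next I would invoke the identification noted just above the statement: for $u = \ch_E$ we have $u^1 = \ch_E^{\wedge}$ and $u^2 = \cdots = u^n = \ch_E^{\vee}$ pointwise. Consequently, as $l_2$ varies, $u^{l_2}(x)$ takes at most the two values $\ch_E^{\wedge}(x)$ and $\ch_E^{\vee}(x)$, so the minimum in the display reduces to a minimum over these two quantities. Likewise, the only values of $l_1$ that give genuinely different left-hand sides are $l_1 = 1$ and $l_1 = n$.

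Specializing the displayed convergence to $l_1 = 1$ yields the first claimed limit, and specializing to $l_1 = n$ yields the second. The argument is essentially a bookkeeping translation between the general $n$-jump-value formulation and the two-jump-value formulation appropriate to characteristic functions, so there is no real obstacle beyond correctly matching up the representatives $u^l$ with $\ch_E^{\wedge}$ and $\ch_E^{\vee}$.
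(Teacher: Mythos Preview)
Your proposal is correct and matches the paper's approach exactly: the paper simply states that this corollary ``is obtained by applying Corollary \ref{cor:quasicontinuity local case} to $u=\ch_E$,'' relying on the same identification $u^1=\ch_E^{\wedge}$, $u^2=\cdots=u^n=\ch_E^{\vee}$ that you spell out.
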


For example, if $x\in O_E$, then $\ch_E^{\wedge}(x)=0=\ch_E^{\vee}(x)$ and necessarily $y_k\in O_E$ for sufficiently large $k\in\N$.

\section{Approximation of sets of finite perimeter}

In this section we prove our main result on the approximation of a set of finite perimeter
by more regular sets in the $\BV$ norm.

We will need to work with Whitney-type coverings of open sets.
For the construction of such coverings and their properties, see e.g.~\cite[Theorem 3.1]{BBS07}.
Given any  open set $U\subset X$ and a scale $R>0$, we can choose a Whitney-type covering 
$\{B_j=B(x_j,r_j)\}_{j=1}^{\infty}$ of $U$ such that
\begin{enumerate}
\item for each $j\in\N$,
\begin{equation}\label{eq:definition of radius of a Whitney ball}
r_j= \min\left\{\frac{\dist(x_j,X\setminus U)}{40\lambda},\,R\right\},
\end{equation}
\item for each $k\in\N$, the ball $10\lambda B_k$ meets at most $C=C(C_d,\lambda)$ 
balls $10\lambda B_j$ (that is, a bounded overlap property holds),
\item  
if $10\lambda B_j$ meets $10\lambda B_k$, then $r_j\le 2r_k$.
\end{enumerate}

Given such a covering of $U$, 
we can take a partition of unity $\{\phi_j\}_{j=1}^{\infty}$ subordinate to the covering, such that $0\le \phi_j\le 1$, each 
$\phi_j$ is a $C/r_j$-Lipschitz function, and $\supp(\phi_j)\subset 2B_j$ for each 
$j\in\N$ (see e.g. \cite[Theorem 3.4]{BBS07}). Finally, we can define a \emph{discrete convolution} $v$ of 
any $u\in L^1_{\loc}(U)$ with respect to the Whitney-type covering by
\begin{equation}\label{eq:definition of discrete convolution}
v:=\sum_{j=1}^{\infty}u_{B_j}\phi_j.
\end{equation}
In general, $v$ is locally Lipschitz in $U$, and hence belongs to $ L^1_{\loc}(U)$. 

We can ``mollify'' $\BV$ functions in open sets in the following manner. Recall the definition of the pointwise representative $\widetilde{u}$ from \eqref{eq:precise representative}.

\begin{theorem}\label{thm:mollifying in an open set}
Let $U\subset\Omega\subset X$ be open sets, and let $u\in L_{\loc}^1(X)$ with $\Vert Du\Vert(\Omega)<\infty$.
Then there exists a function $w\in L_{\loc}^1(X)$ with $\Vert Dw\Vert(\Omega)<\infty$ such that $w=u$ in $\Omega\setminus U$ and
$\widetilde{w}|_{U}\in N^{1,1}(U)\cap \liploc(U)$ with an
upper gradient $g$ satisfying $\Vert g\Vert_{L^1(U)}\le C\Vert Du\Vert(U)$.

The function $w$ is defined in $U$ as a limit of discrete convolutions of $u$ with respect to Whitney-type coverings of open sets
$U_1\subset U_2\subset \ldots\subset U$ with $U=\bigcup_{i\in\N}U_i$, at an arbitrary fixed scale $R>0$.
For $\mathcal H$-almost every $x\in \partial U$ we have
\begin{equation}\label{eq:boundary trace behavior for mollified function}
\frac{1}{\mu(B(x,r))}\int_{B(x,r)\cap U}|w-u|\,d\mu\to 0
\end{equation}
as $r\to 0$.
\end{theorem}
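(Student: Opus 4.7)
The plan is to construct $w$ as the locally uniform limit of discrete convolutions of $u$ at the fixed scale $R$ over an exhaustion $U_1\Subset U_2\Subset\cdots\nearrow U$, extend by $u$ on $X\setminus U$, and then establish the upper gradient bound by the standard Whitney--Poincar\'e machinery and the boundary trace by a Lebesgue-point argument at $\mathcal H$-a.e.\ $x\in\partial U$.

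First, for each $i$ I take a Whitney-type cover $\{B_j^{(i)}=B(x_j^{(i)},r_j^{(i)})\}_j$ of $U_i$ as in \eqref{eq:definition of radius of a Whitney ball} with subordinate partition of unity $\{\phi_j^{(i)}\}_j$, and form the discrete convolution $v_i:=\sum_j u_{B_j^{(i)}}\phi_j^{(i)}$ on $U_i$ as in \eqref{eq:definition of discrete convolution}. Each $v_i\in\liploc(U_i)$. Using the partition-of-unity identity $\sum_j(\phi_j^{(i)}(y)-\phi_j^{(i)}(z))=0$ to subtract a constant from each coefficient yields the standard pointwise bound
\[
\Lip v_i(y)\le C\sum_{j:\,y\in 2B_j^{(i)}}\frac{|u_{B_j^{(i)}}-u_{5\lambda B_j^{(i)}}|}{r_j^{(i)}},
\]
and integration combined with the $(1,1)$-Poincar\'e inequality applied to each $5\lambda B_j^{(i)}$ and the bounded overlap of $\{5\lambda B_j^{(i)}\}$ gives $\int_{U_i}\Lip v_i\,d\mu\le C\|Du\|(U)$, uniformly in $i$.

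Second, on any compact $K\Subset U$ and for $i$ large enough that $\dist(K,X\setminus U_i)>80\lambda R$, every Whitney ball meeting $K$ has radius exactly $R$, so the $v_i$ are uniformly bounded and equi-Lipschitz on $K$. A diagonal Arzel\`a--Ascoli argument then extracts a subsequence converging locally uniformly in $U$ to some $w\in\liploc(U)$, which I extend by $w:=u$ on $X\setminus U$. Lower semicontinuity of the $L^1$-norm of upper gradients under local uniform convergence of locally Lipschitz functions gives an upper gradient $g$ of $\widetilde w|_U$ with $\|g\|_{L^1(U)}\le C\|Du\|(U)$, so $\widetilde w|_U\in N^{1,1}(U)\cap\liploc(U)$ and $\|Dw\|(U)\le C\|Du\|(U)$. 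Combined with $\|Du\|(\Omega\setminus\overline U)<\infty$ and the boundary trace established below (which rules out a singular concentration of $\|Dw\|$ on $\partial U$), subadditivity \eqref{eq:subadditivity} yields $\|Dw\|(\Omega)<\infty$.

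Third, for \eqref{eq:boundary trace behavior for mollified function} I invoke the standard fine-property fact that for $\mathcal H$-a.e.\ $x\in X$ the precise representative from \eqref{eq:precise representative} satisfies $\dashint_{B(x,r)}|u-\widetilde u(x)|\,d\mu\to 0$ as $r\to 0$. Fix such $x\in\partial U$. Whenever $y\in 2B_j^{(i)}\cap B(x,r)$, \eqref{eq:definition of radius of a Whitney ball} together with $x\in X\setminus U_i$ gives $r_j^{(i)}\le C\dist(y,X\setminus U_i)\le Cd(y,x)\le Cr$ and hence $B_j^{(i)}\subset B(x,Cr)$. Using $\sum_j\phi_j^{(i)}(y)=1$, Fubini to exchange the sum and the integral in $y$, and the bounded-overlap property of the Whitney cover together with doubling, I obtain
\[
\dashint_{B(x,r)\cap U}|v_i-\widetilde u(x)|\,d\mu\le C\dashint_{B(x,Cr)}|u-\widetilde u(x)|\,d\mu\to 0
\]
as $r\to 0$, uniformly in $i$. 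Passing $i\to\infty$ and combining with the analogous estimate for $u$ itself produces \eqref{eq:boundary trace behavior for mollified function}.

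The main obstacle I expect is handling the interface $\partial U$: nothing in the interior Whitney--Poincar\'e argument controls $\|Dw\|$ across $\partial U$, so a singular concentration of $\|Dw\|$ on $\Omega\cap\partial U$ must be ruled out via the $\mathcal H$-a.e.\ boundary trace, which in turn rests on the Lebesgue-point analysis above and on the fact that the Whitney radii $r_j^{(i)}$ shrink to zero at precisely the right rate near $\partial U$.
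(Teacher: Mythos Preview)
Your overall strategy---building $w$ as a (subsequential) locally uniform limit of discrete convolutions over an exhaustion of $U$, bounding upper gradients by the Whitney--Poincar\'e machinery, and handling the interface via a trace estimate---is the right one and is essentially what the paper obtains by citing \cite[Corollary~3.6]{LaSh}. However, your argument for the boundary trace \eqref{eq:boundary trace behavior for mollified function} contains a genuine gap.

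The ``standard fine-property fact'' you invoke, that $\dashint_{B(x,r)}|u-\widetilde u(x)|\,d\mu\to 0$ for $\mathcal H$-a.e.\ $x$, is \emph{false} on the jump set $S_u=\{u^{\wedge}<u^{\vee}\}$: at such $x$ the average $\dashint_{B(x,r)}|u-\widetilde u(x)|\,d\mu$ stays bounded away from $0$ (consider $u=\ch_E$ at $x\in\partial^*E$). Since $S_u$ has in general positive $\mathcal H$-measure and $S_u\cap\partial U$ may as well (take $u=\ch_U$ with $\partial U$ regular), your triangle-inequality split through the constant $\widetilde u(x)$ does not close on that part of $\partial U$. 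The fix uses the same Whitney ingredients but avoids the split: estimate directly
\[
\int_{B(x,r)\cap U_i}|v_i-u|\,d\mu
\le C\sum_{j:\,2B_j^{(i)}\cap B(x,r)\neq\emptyset} r_j^{(i)}\,\|Du\|(4\lambda B_j^{(i)})
\le C\,r\,\|Du\|\bigl(B(x,Cr)\cap U\bigr),
\]
using the $\BV$ Poincar\'e inequality, $r_j^{(i)}\le Cr$, $4\lambda B_j^{(i)}\subset U$, and bounded overlap. Then one shows $r\,\|Du\|(B(x,r)\cap U)/\mu(B(x,r))\to 0$ for $\mathcal H$-a.e.\ $x\in\partial U$ by a Vitali covering argument: for the set $A_\eps\subset\partial U$ where the $\limsup$ exceeds $\eps$ one gets $\mathcal H_\delta(A_\eps)\le C\eps^{-1}\|Du\|(\{y\in U:\dist(y,\partial U)<\delta\})$, and the right side tends to $0$ as $\delta\to 0$ because $\|Du\|(U)<\infty$ and $U\cap\partial U=\emptyset$.

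A minor point: your claim that for $K\Subset U$ and large $i$ every Whitney ball meeting $K$ has radius exactly $R$ is not generally true (it fails whenever $\dist(K,X\setminus U)<80\lambda R$). What you actually need---and what holds---is that the Whitney radii meeting $K$ are uniformly bounded below once $\dist(K,X\setminus U_i)\ge\tfrac12\dist(K,X\setminus U)$; this already gives the equi-Lipschitz and equi-boundedness required for Arzel\`a--Ascoli.
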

This is essentially \cite[Corollary~3.6]{LaSh}.
The last two sentences of the theorem are not part of \cite[Corollary~3.6]{LaSh}, but follow from
its proof. Moreover, in \cite[Corollary~3.6]{LaSh} we make the assumption $u\in\BV(\Omega)$, but the proof runs through almost verbatim for the slightly more general case presented here.

Now we give our main result.

\begin{theorem}\label{thm:approximation for sets of finite perimeter}
Let $\Omega\subset X$ be an open set, let $E\subset X$ be a $\mu$-measurable set with $P(E,\Omega)<\infty$, and let $\eps>0$. Then there exists a $\mu$-measurable set $F\subset X$ with
\[
\Vert \ch_F-\ch_E\Vert_{\BV(\Omega)}<\eps\quad\textrm{and}\quad \mathcal H(\Omega\cap \partial F\setminus \partial^*F)=0.
\]
\end{theorem}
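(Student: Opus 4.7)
The plan is to obtain $F$ as a superlevel set of a mollification of $\ch_E$ on a small open set $U$ containing the ``bad'' set from quasicontinuity. First, I replace $E$ by its representative $I_E$ (which is $\mu$-equivalent), and choose $\eta,\delta>0$ small in terms of $\eps$, using Lemma \ref{lem:variation measure and capacity} so that $\capa_1(A)<\delta$ implies $P(E,A)<\eta$. Corollary \ref{cor:quasicontinuity for sets of finite perimeter} yields an open set $G\subset\Omega$ of small capacity on whose complement the quasicontinuity of $\ch_E$ holds, and Lemma \ref{lem:perimeter of G} enlarges $G$ to an open $U\supset G$ with $\capa_1(U)<\delta$ and $P(U,X)<\eta$. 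I further refine $U$ by replacing it with a superlevel set $\{v>s\}$ of a Lipschitz regularization $v$ of $\ch_U$ at a generic level $s\in(0,1)$, so that Proposition \ref{prop:null difference between topological and meas theor bdry} gives $\mathcal{H}(\partial U\setminus\partial^*U)=0$ and the coarea formula \eqref{eq:coarea} gives $\mu(\partial U)=0$.

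Next, I apply Theorem \ref{thm:mollifying in an open set} with $u=\ch_E$ to obtain $w$ equal to $\ch_E$ pointwise on $\Omega\setminus U$ and with $\widetilde w|_U\in\liploc(U)$ admitting an upper gradient of $L^1(U)$-norm at most $CP(E,U)<C\eta$. Combining Lemma \ref{lem:coarea inequality}, the $\BV$-coarea formula \eqref{eq:coarea}, and Proposition \ref{prop:null difference between topological and meas theor bdry} applied to $\widetilde w$ on $U$, I pick $t\in(0,1)$ satisfying both $\mathcal{H}(U\cap\partial\{\widetilde w>t\}\setminus\partial^*\{\widetilde w>t\})=0$ and $P(\{\widetilde w>t\},U)<C\eta$, and set $F:=\{\widetilde w>t\}$. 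The $\BV$-approximation then follows routinely: $F\triangle E\subset U$ gives $\Vert\ch_F-\ch_E\Vert_{L^1(\Omega)}\le\mu(U)$, and since $\ch_F-\ch_E$ vanishes pointwise on $X\setminus U$, subadditivity \eqref{eq:subadditivity} bounds $\Vert D(\ch_F-\ch_E)\Vert(\Omega)$ by $P(F,\overline U)+P(E,\overline U)$, each of whose boundary contributions on $\partial U$ is controlled by $C_d\mathcal{H}(\partial^*U)\le C\eta$ via \eqref{eq:def of theta} and the refinement of $U$.

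The principal content is $\mathcal{H}(\Omega\cap\partial F\setminus\partial^*F)=0$, which I split over $U$, $\partial U\cap\Omega$, and $\Omega\setminus\overline U$. The $U$-piece is exactly Proposition \ref{prop:null difference between topological and meas theor bdry} with the chosen $t$. On $\Omega\setminus\overline U$, $w=\ch_{I_E}$ pointwise, and the quasicontinuity on $\Omega\setminus G\supset\Omega\setminus\overline U$ combined with the openness of $\Omega\setminus\overline U$ in $X$ makes $I_E\cap(\Omega\setminus\overline U)$ and $O_E\cap(\Omega\setminus\overline U)$ open in $X$, giving $\partial F\cap(\Omega\setminus\overline U)\subset\partial^*E\cap(\Omega\setminus\overline U)=\partial^*F\cap(\Omega\setminus\overline U)$. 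The hard part will be the $\partial U$-piece: my strategy is to combine the boundary trace \eqref{eq:boundary trace behavior for mollified function} with the explicit discrete-convolution structure of $\widetilde w$ to show that at $\mathcal{H}$-a.e.\ $x\in\partial U\setminus(G\cup\partial^*E)$, the averages $\mu(B\cap E)/\mu(B)$ defining the discrete convolution on balls at $x$ degenerate to $\ch_E(x)\in\{0,1\}$, so $\widetilde w(y)\to\ch_E(x)$ as $y\to x$ from $U$, while the quasicontinuity forces the same convergence from $X\setminus U$; hence $\widetilde w$ is continuous at $x$, and for $t\in(0,1)$, $x\notin\partial F$. The true difficulty is to ensure that the residual set $\partial U\cap(G\cup\partial^*E)$ is $\mathcal{H}$-null rather than merely small, which I would address by a generic choice of the level $s$ defining $U$ (so that $\partial U$ avoids $\partial^*E$ in $\mathcal{H}$-measure) together with an exhaustion of $G$ along a sequence of capacity $\to 0$, so that its relevant trace on $\partial U$ becomes $\capa_1$-null, hence $\mathcal{H}$-null by Remark \ref{rmk:capacities and Hausdorff contents}.
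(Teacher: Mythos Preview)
Your overall architecture matches the paper's: obtain a small bad set $G$ from quasicontinuity, enlarge to an open $U$, mollify $\ch_E$ inside $U$ via Theorem~\ref{thm:mollifying in an open set}, and take a generic superlevel set $\{\widetilde w>t\}$ as $F$, with Proposition~\ref{prop:null difference between topological and meas theor bdry} handling $U\cap\partial F\setminus\partial^*F$. The decisive gap is in your treatment of $\Omega\cap\partial U$. The paper shows that for every $x\in\Omega\cap\partial U\cap O_E$ there is $r>0$ with $\widetilde w\le 1/4$ on $B(x,r)\cap U$ (and symmetrically near $\partial U\cap I_E$), and this step \emph{requires} Lemma~\ref{lem:uniform convergence of G}, which you do not invoke. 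Here is the obstruction: for $y\in U$ near $x$, the Whitney balls $B_j$ with $y\in 2B_j$ have radius $r_j\approx\dist(y,X\setminus U)$, and one needs $u_{B_j}=\mu(E\cap B_j)/\mu(B_j)$ small. Quasicontinuity gives $B(x,\delta)\setminus G\subset O_E$, hence $\mu(E\cap B_j)\le\mu(G\cap B_j)$ up to a null set once $B_j\subset B(x,\delta)$. But $\mu(G\cap B_j)/\mu(B_j)$ need not be small: $G$ is merely an open set of small capacity, and Whitney balls sitting inside $U$ can carry arbitrarily large $G$-density. Neither $x\in O_E$ nor the boundary trace~\eqref{eq:boundary trace behavior for mollified function} (an $L^1$ statement) controls this ratio, because $r_j$ can be much smaller than $d(y,x)$, so doubling does not transfer the density information at $x$ down to $B_j$. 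Lemma~\ref{lem:uniform convergence of G} is precisely engineered to close this: it builds $U\supset G$ so that $\mu(G\cap B(z,r))/\mu(B(z,r))\to 0$ \emph{uniformly} for $z\in X\setminus U$, and the paper then centers at a point $z\in X\setminus U$ with $d(y,z)=\dist(y,X\setminus U)$ to force $u_{B_j}\le 1/4$. Your replacement of this lemma by Lemma~\ref{lem:perimeter of G} (which gives only $P(U,X)$ small) does not supply this uniformity, so your claim that ``the averages \ldots\ degenerate to $\ch_E(x)$'' is unjustified.

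Your proposed remedies for the residual set also misread the situation. You do not need $\mathcal H(\partial U\cap\partial^*E)=0$: once the pointwise control of $\widetilde w$ near $\partial U\cap(I_E\cup O_E)$ is in hand, those points are interior or exterior to $F$, while the boundary trace~\eqref{eq:boundary trace behavior for mollified function} places $\partial U\cap\partial^*E\setminus N$ directly into $\partial^*F$; the only leftover is the $\mathcal H$-null set $N$. Trying instead to make $\partial U$ miss $\partial^*E$ in $\mathcal H$-measure, or to ``exhaust $G$ along a sequence of capacity $\to 0$'' to force a $\capa_1$-null trace, is both unnecessary and ill-posed (the quasicontinuity sets for different thresholds are not nested). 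Your extra refinement of $U$ to a Lipschitz superlevel set is also delicate---you must keep $G\subset U$ while still controlling $\capa_1(U)$---but that is secondary to the missing uniform density control above.
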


\begin{proof}
Apply Corollary \ref{cor:quasicontinuity for sets of finite perimeter} to obtain a set
$G\subset \Omega$ with $\capa_1(G)<\eps$, and then apply Lemma
\ref{lem:uniform convergence of G} to obtain an open set $U\subset \Omega$ with
$U\supset G$ such that
\[
\frac{\mu(B(x,r)\cap G )}{\mu(B(x,r))}\to 0\qquad\textrm{as }r\to 0
\]
uniformly for $x\in \Omega\setminus U$.
By Lemma \ref{lem:variation measure and capacity} we can also assume that
\begin{equation}\label{eq:smallness of perimeter in U}
\Vert D\ch_E\Vert(U)< \eps.
\end{equation}

In the following, we ``mollify'' $\ch_E$ in the set $U$ and then define $F$ as a super-level set of the mollified function. First, apply Theorem \ref{thm:mollifying in an open set} with $u=\ch_E$ and at the scale $R=1$ to obtain a function $w\in L^1_{\loc}(X)$ with
$\Vert Dw\Vert(\Omega)<\infty$ and $\widetilde{w}\in\liploc(U)$.

Fix $x\in\Omega\cap\partial U$ with $x\in O_E$. By Corollary \ref{cor:quasicontinuity for sets of finite perimeter}, there exists $\delta\in (0,1)$ such that $B(x,\delta)\subset\Omega$ and
\begin{equation}\label{eq:using quasicontinuity}
y\in O_E\ \ \ \textrm{for all}\ y\in B(x,\delta)\setminus G.
\end{equation}
By making $\delta$ smaller, if necessary, by Lemma \ref{lem:uniform convergence of G} we also have
\begin{equation}\label{eq:using uniform convergence for G}
\frac{\mu(B(z,r)\cap G)}{\mu(B(z,r))}<\frac{1}{4 C_d^{\lceil\log_2 ( 200\lambda)\rceil}}
\end{equation}
for all $z\in X\setminus U$ and $r\in (0,\delta)$.
Here $\lceil a\rceil$ is the smallest integer at least $a\in\R$.

Fix $y\in B(x,\delta/4)\cap U$.
Recall that $w$ is defined in $U$ as a limit of discrete convolutions of $u$
with respect to Whitney-type coverings $\{B_j^i=B(x_j^i,r_j^i)\}_{j\in\N}$ 
of sets $U_i\subset U$ at scale $R=1$.
Since $U=\bigcup_{i\in\N}U_i$,
we can fix a sufficiently large $i\in\N$ such that
\[
\dist(y,X\setminus U_i)\ge \dist(y,X\setminus U)/2.
\]
Suppose that $y\in B(x_j^i,2 r_j^i)$. It is easy to see that $B(x_j^i,2 r_j^i)\subset B(x,\delta)$.
Then 
\begin{align*}
r_j^i= \min\left\{\frac{\dist(x_j^i,X\setminus U_i)}{40\lambda},\,R\right\}
&= \frac{\dist(x_j^i,X\setminus U_i)}{40\lambda}\\
&\ge \frac{\dist(y,X\setminus U_i)-2r_j^i}{40\lambda}\\
&\ge \frac{\dist(y,X\setminus U)}{80\lambda}-\frac{r_j^i}{20\lambda}.
\end{align*}
Thus
\[
r_j^i\ge \frac{\dist(y,X\setminus U)}{90\lambda}.
\]
Since $B(x,\delta)\subset \Omega$, there is $z\in \Omega\setminus U$ with
$d(y,z)=\dist(y,X\setminus U)$.
Then $B(z,2d(y,z))\subset 200 \lambda B_j^i$, so by the doubling property of the measure
\[
\mu(B(z,2d(y,z)))\le C_d^{\lceil\log_2(200\lambda)\rceil}\mu(B_j^i).
\]
Moreover, 
\[
d(y,z)=\dist(y,X\setminus U)\le d(y,x)<\delta/4,
\]
and thus $d(x,z)<\delta/2$. Hence
$B(z,2d(y,z))\subset B(x,\delta)$,
so that
\[
2B_j^i\setminus O_E\subset B(z,2d(y,z))\setminus O_E\subset B(z,2d(y,z))\cap G
\]
by \eqref{eq:using quasicontinuity}.
Using this and \eqref{eq:using uniform convergence for G}, we obtain
\begin{align*}
u_{B^i_j}
=\frac{\mu(E\cap B^i_j)}{\mu(B^i_j)}
&\le \frac{C_d^{\lceil\log_2(200\lambda)\rceil}}{\mu(B(z,2d(y,z)))}\mu(B(z,2d(y,z))\cap E)\\
&\le \frac{C_d^{\lceil\log_2(200\lambda)\rceil}}{\mu(B(z,2d(y,z)))}\mu(B(z,2d(y,z))\cap G)\\
&\le \frac{1}{4}.
\end{align*}
For each $i\in\N$, let $w_i$ be the discrete convolution of $u$ in $U_i$ 
with respect to the Whitney-type covering $\{B_j^i\}_{j\in\N}$.
Recalling the definition of a discrete convolution from \eqref{eq:definition of discrete convolution}, we have for suitable Lipschitz functions $\phi_j^i$
\[
w_i(y)=\sum_{j\in\N}u_{B_j^i}\phi_j^i(y) \le \frac{1}{4}\sum_{j\in\N}\phi_j^i(y)=\frac{1}{4}.
\]
According to Theorem \ref{thm:mollifying in an open set}, the quantity $\widetilde{w}(y)$ is defined as the limit of $w_i(y)$ as $i\to\infty$, so we have $\widetilde{w}(y)\le 1/4$.
Since $y\in B(x,\delta/4)\cap U$ was arbitrary, we have $\widetilde{w}\le 1/4$ in $B(x,\delta/4)\cap U$.
Similarly, for any $x\in\Omega\cap \partial U\cap I_E$ there exists some $r>0$ such that $\widetilde{w}\ge 3/4$ in $B(x,r)\cap U$.

By the $\BV$ coarea formula \eqref{eq:coarea}, we can find a set $T\subset (1/4,3/4)$ with $\mathcal L^1(T)\ge 1/4$ such that for all $t\in T$,
\begin{equation}\label{eq:estimate for boundary of level set}
\Vert D\ch_{\{w>t\}}\Vert(U)\le 4\Vert Dw\Vert(U)\le C\Vert D\ch_E\Vert(U),
\end{equation}
where the last inequality follows from Theorem 
\ref{thm:mollifying in an open set}.
By \eqref{eq:boundary trace behavior for mollified function}, there exists $N\subset \partial U$ with $\mathcal H(N)=0$ such that for every $x\in \partial U\setminus N$, we have
\[
\frac{1}{\mu(B(x,r))}\int_{B(x,r)\cap U}|w-\ch_E|\,d\mu \to 0
\]
as $r\to 0$.
For any fixed $t\in (0,1)$, this implies
\begin{equation}\label{eq:Lebesgue property at boundary of U}
\begin{split}
&\frac{1}{\mu(B(x,r))}\int_{B(x,r)\cap U}|\ch_{\{w>t\}}-\ch_E|\,d\mu\\
&\qquad\quad \le \frac{1}{\min\{t,1-t\}}\frac{1}{\mu(B(x,r))}\int_{B(x,r)\cap U}|w-\ch_E|\,d\mu\to 0
\end{split}
\end{equation}
as $r\to 0$.

Again by the $\BV$ coarea formula \eqref{eq:coarea}, for almost every $t\in (0,1)$, setting
\[
F_t:=(I_E\cap\Omega\setminus U)\cup(\{\widetilde{w}>t\}\cap U),
\]
so that $F_t=\{w>t\}\cap\Omega$ as $\mu$-equivalence classes,
we have $P(F_t,\Omega)<\infty$. By \eqref{eq:Lebesgue property at boundary of U}, for
every $x\in\Omega\setminus (U\cup N)$ and for all $s\neq 0$, we have $x\notin \partial^*\{\ch_{F_t}-\ch_E>s\}$. Thus by the $\BV$ coarea formula \eqref{eq:coarea} and \eqref{eq:def of theta}, for almost every $t\in (0,1)$
\begin{align*}
\Vert D(\ch_{F_t}-\ch_E)\Vert(\Omega\setminus U)
&=\int_{-\infty}^{\infty}P(\{\ch_{F_t}-\ch_E>s\},\Omega\setminus U)\,ds\\
&\le C\int_{-\infty}^{\infty}\mathcal H(\partial^*\{\ch_{F_t}-\ch_E>s\}\cap (\Omega\setminus U))\,ds\\
&=0.
\end{align*}
By using this and \eqref{eq:estimate for boundary of level set}, we have for almost every $t\in T$
\begin{align*}
\Vert D(\ch_{F_t}-\ch_E)\Vert(\Omega)
&=\Vert D(\ch_{F_t}-\ch_E)\Vert(U)\\
&\overset{\eqref{eq:subadditivity}}{\le} \Vert D\ch_{F_t}\Vert(U)+ \Vert D\ch_E\Vert(U)\\
&\le C\Vert D\ch_E\Vert(U)+ \Vert D\ch_E\Vert(U)\\
&< C\eps
\end{align*}
by \eqref{eq:smallness of perimeter in U}, and also
\begin{equation}\label{eq:null difference between topological and meas theor bdry}
\mathcal H(U\cap\partial \{\widetilde{w}>t\}\setminus \partial^*\{\widetilde{w}>t\})=0
\end{equation}
by Proposition \ref{prop:null difference between topological and meas theor bdry}.
We fix one such $t$ and define $F:=F_t$. Since
\[
\Vert \ch_F-\ch_E\Vert_{L^1(\Omega)}\le \mu(U)\le \capa_1(U)< \eps,
\]
we have $\Vert \ch_F-\ch_E\Vert_{\BV(\Omega)}< C\eps$ and one claim of the theorem is proved.

From Corollary \ref{cor:quasicontinuity for sets of finite perimeter} we know that if
$x\in\partial I_E\cap \Omega\setminus \overline{U}$, then $x\in\partial^*E$. Thus
from the definition of $F$ it follows that
\[
\partial F\cap\Omega\setminus \overline{U}=\partial I_E\cap\Omega\setminus \overline{U}=\partial^* E\cap\Omega\setminus \overline{U}=\partial^*F\cap\Omega\setminus \overline{U}.
\]
If $x\in \Omega\cap \partial U\cap O_E$,
the previously proved fact that $\widetilde{w}\le 1/4$ in $B(x,r)\cap U$ for some $r>0$ implies that
\[
\ch_{\{\widetilde{w}>t\}}(y)=0\quad \textrm{for all}\ y\in B(x,r)\cap U
\]
for any $t\in (1/4,3/4)$.
Combining this with \eqref{eq:using quasicontinuity}, we conclude that $x$ is an exterior point of $F$.
Analogously, if $x\in \Omega\cap \partial U\cap I_E$, then $x$ is an interior point of $F$. If $x\in \Omega\cap\partial U\cap\partial^*E\setminus N$, then $x\in \partial U\cap \partial^*F$ by \eqref{eq:Lebesgue property at boundary of U}. In total,
\[
\partial F\setminus \partial^*F\subset (U\cap \partial F\setminus \partial^*F)\cup N
= (U\cap \partial\{\widetilde{w}>t\}\setminus \partial^*\{\widetilde{w}>t\})\cup N.
\]
Hence
\[
\mathcal H(\partial F\setminus \partial^*F)\le \mathcal H(U\cap \partial\{\widetilde{w}>t\}\setminus \partial^*\{\widetilde{w}>t\})=0
\]
by \eqref{eq:null difference between topological and meas theor bdry}.
\end{proof}

\begin{example}\label{ex:enlarged rationals}
A standard example illustrating how badly behaved a set of finite perimeter can be is given by the so-called \emph{enlarged rationals}.
Consider the Euclidean space $\R^2$ equipped with the Lebesgue measure $\mathcal L^2$.
Let $\{q_i\}_{i\in\N}$ be an enumeration of $\mathbb{Q}\times\mathbb{Q}\subset\R^2$,
and define
\[
E:=\bigcup_{i\in\N} B(q_i,2^{-i}).
\]
Clearly $\mathcal L^2(E)\le \pi$. By the lower semicontinuity and subadditivity of perimeter, see \eqref{eq:Caccioppoli sets form an algebra}, we can estimate
\[
P(E,\R^2)\le \sum_{i=1}^{\infty} P(B(q_i,2^{-i}),\R^2)\le 2\pi \sum_{i=1}^{\infty}2^{-i},
\]
so that $P(E,\R^2)<\infty$, and then also $\mathcal H(\partial^*E)<\infty$.
On the other hand, $\partial E=\R^2\setminus E$, so that $\mathcal L^2(\partial E)=\infty$ and in particular $\mathcal H^1(\partial E)=\infty=\mathcal H(\partial E)$ (where $\mathcal H^1$ is the $1$-dimensional Hausdorff measure, which is comparable to the codimension $1$
Hausdorff measure $\mathcal H$). However, we can define the set $F\subset \R^2$ of Theorem \ref{thm:approximation for sets of finite perimeter} as
\[
F:=\bigcup_{i=1}^N B(q_i,2^{-i})
\]
for $N\in\N$ sufficiently large. It can then be shown that $\Vert \ch_F-\ch_E\Vert_{\BV(\R^2)}\to 0$ as $N\to\infty$, and that $\mathcal H(\partial F\setminus \partial^*F)=0$. By slightly modifying the set $F$ near the intersections of the spheres $\partial B(q_i,2^{-i})$, if necessary, we can even ensure that $\partial F=\partial^*F$.
\end{example}

\begin{openproblem}
In Theorem \ref{thm:approximation for sets of finite perimeter}, is it possible to obtain
 $\partial F\cap\Omega= \partial^*F\cap\Omega$?
\end{openproblem}

If the answer is yes, note that $\inte(F)\cap\Omega=I_F\cap\Omega$, and thus in $\Omega$, 
$\ch_{F}^{\wedge}=\ch_{I_F}$ is a lower semicontinuous function. Similarly, 
in $\Omega$, $\ch_F^{\vee}=\ch_{I_F\cup \partial^*F}=\ch_{\overline{F}}$ is then an upper semicontinuous function.

Note also that it follows from the proof of Theorem \ref{thm:approximation for sets of finite perimeter} that $\ch_F^{\wedge}$ and $\ch_E^{\wedge}$ can differ only in the set $U\cup N$, where $N$ is the $\mathcal H $-negligible set defined before \eqref{eq:Lebesgue property at boundary of U}. Thus we have
\[
\capa_1(\{\ch_F^{\wedge}\neq \ch_E^{\wedge}\})<\eps\qquad\textrm{and similarly}\qquad
\capa_1(\{\ch_F^{\vee}\neq \ch_E^{\vee}\})<\eps.
\]
For a more general $\BV$ function, we can now ask the following.

\begin{openproblem}
Let $\Omega\subset X$ be an open set, let $u\in\BV_{\loc}(\Omega)$, and let $\eps>0$. Can we find a function $v\in\BV_{\loc}(\Omega)$ with $\Vert v-u\Vert_{\BV(\Omega)}<\eps$,
\[
\capa_1(\{v^{\wedge}\neq u^{\wedge}\})<\eps,\qquad
\capa_1(\{v^{\vee}\neq u^{\vee}\})<\eps,
\]
and such that $v^{\wedge}$ is lower semicontinuous and $v^{\vee}$ is upper semicontinuous?
\end{openproblem}

%\begin{example}
%In Theorem \ref{thm:approximation for sets of finite perimeter}, we have in particular that
%\begin{equation}\label{eq:symmetric difference of boundaries}
%\mathcal H(\partial F\Delta\partial^*E)\le \mathcal H(\partial F\setminus \partial^*F)+\mathcal H(\partial F^*\Delta\partial^*E)\le \eps+C\Vert D(\ch_E-\ch_F)\Vert(X)\le 2\eps.
%\end{equation}
%\end{example}

\end{document}